\newtheorem{thm}{Theorem}[section]
\newtheorem{lem}[thm]{Lemma}
\newtheorem{prop}[thm]{Proposition}
\newtheorem{defn}[thm]{Definition}
\newtheorem{cor}[thm]{Corollary}
\newtheorem{rem}[thm]{Remark}
\newtheorem{conj}[thm]{Conjecture}
\newcommand{\Tr}{\operatorname{Tr}}
\newcommand{\N}{\mathbb{N}}
\newcommand{\Z}{\mathbb{Z}}
\newcommand{\F}{\mathbb{F}}
\begin{document}

\title{Gauss sums and the maximum cliques in generalized Paley graphs of square order}
\author{Chi Hoi Yip}
\address{Department of Mathematics \\ University of British Columbia \\ 1984 Mathematics Road \\ Canada V6T 1Z2}
\email{kyleyip@math.ubc.ca}
\subjclass[2020]{Primary 11T24; Secondary 05C69, 11T30}
\keywords{Gauss sum, Paley graph, clique number, maximum clique.}
\date{\today}

\maketitle

\begin{abstract}
Let $GP(q,d)$ be the $d$-Paley graph defined on the finite field $\mathbb{F}_q$. It is notoriously difficult to improve the trivial upper bound $\sqrt{q}$ on the clique number of $GP(q,d)$. In this paper, we investigate the connection between Gauss sums over a finite field and the maximum cliques of their corresponding generalized Paley graphs. We show that the trivial upper bound on the clique number of $GP(q,d)$ is tight if and only if $d \mid (\sqrt{q}+1)$, which strengthens the previous related results by Broere-D\"oman-Ridley and Schneider-Silva. We also obtain a new simple proof of Stickelberger's theorem on evaluating semi-primitive Gauss sums. 
\end{abstract}

\section{Introduction}

Throughout the paper, we let $p$ be an odd prime, $q$ a power of $p$, $\F_q$ the finite field with $q$ elements, and $\F_q^*=\F_q \setminus \{0\}$. We always assume that $d$ is a divisor of $q-1$ such that $d>1$.

\subsection{Basic terminology}
Paley graphs and generalized Paley graphs form a nice and well-studied family of Cayley graphs.  The study of Paley graphs and generalized Paley graphs nicely connects many branches of mathematics, such as number theory, algebraic graph theory, discrete geometry, design theory, and coding theory \cite{GJ, thesis}.  In this paper, we will focus on improving the trivial upper bound on the clique number of a generalized Paley graph with square order using a number-theoretical approach. 

Suppose $q$ is a power of a prime $p$ such that $q \equiv 1 \pmod{4}$. The {\em Paley graph} on $\F_q$, denoted $P_q$, is the graph whose vertices are the elements of $\F_q$, such that two vertices are adjacent if and only if the difference of the two vertices is a square in $\F_q^*$. Similarly one can define generalized Paley graphs. They were first introduced by Cohen \cite{SC} in 1988, and later reintroduced by Lim and Praeger \cite{LP} in 2009. Let $d>1$ be a positive integer. The {\em $d$-Paley graph} on $\F_q$, denoted $GP(q,d)$, is the graph whose vertices are the elements of $\F_q$, where two vertices are adjacent if and only if the difference of the two vertices is a $d$-th power in $\F_q^*$. It is standard (see for example \cite[Section 4]{SC} and \cite[Section 5]{Yip2}) to further assume that $q \equiv 1 \pmod {2d}$. 

Recall that a {\em clique} in a graph $X$ is a subgraph of $X$ that is a complete graph. For a graph $X$, the {\em clique number} of $X$, denoted $\omega (X)$, is the size of a maximum clique of $X$. The structure of cliques in generalized Paley graphs is notoriously difficult to study \cite{CL}. In particular, finding reasonably good bounds for their clique numbers remains to be an open problem in additive combinatorics \cite{CL}, and there is a huge gap between the best-known upper bounds and lower bounds. The following upper bound on the clique number is well-known.

\begin{thm} [Trivial upper bound on the clique number] \label{trivial_ub}
Let $d$ be a positive integer greater than $1$. Let $q \equiv 1 \pmod {2d}$ be a prime power, then  $\omega\big(GP(q,d)\big) \leq \sqrt{q}$. 
\end{thm}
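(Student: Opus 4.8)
The plan is to prove the equivalent statement $\omega\big(GP(q,d)\big)^2 \le q$ by combining two ingredients: a general inequality valid for all vertex-transitive graphs, and a symmetry special to generalized Paley graphs. First I would record the basic setup. The graph $GP(q,d)$ is the Cayley graph on $(\F_q,+)$ with connection set $D := \{x^d : x \in \F_q^*\}$, the group of nonzero $d$-th powers, which is an index-$d$ subgroup of $\F_q^*$; the hypothesis $q \equiv 1 \pmod{2d}$ guarantees $-1 \in D$, so the graph is genuinely undirected. Since Cayley graphs are vertex-transitive, the clique--coclique bound applies, giving $\omega(X)\,\alpha(X) \le |V(X)|$ for any vertex-transitive $X$, where $\alpha$ denotes the independence number; in our case this reads $\omega\big(GP(q,d)\big)\,\alpha\big(GP(q,d)\big) \le q$.

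The heart of the argument is then to show $\alpha\big(GP(q,d)\big) \ge \omega\big(GP(q,d)\big)$, so that $\omega^2 \le \omega\,\alpha \le q$. To do this I would fix a generator $g$ of $\F_q^*$ and write $D_i = g^i D$ for the cyclotomic cosets, so that $D_0 = D$. For each $i$ let $\Gamma_i$ be the Cayley graph on $\F_q$ with connection set $D_i$; thus $\Gamma_0 = GP(q,d)$. Two observations then suffice. First, multiplication by $g^{-i}$ is a graph isomorphism $\Gamma_i \to \Gamma_0$, since it carries a difference lying in $D_i$ to one lying in $g^{-i}D_i = D_0$; hence $\omega(\Gamma_i) = \omega(\Gamma_0)$ for every $i$. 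Second, because $D_1$ is disjoint from the connection set $D_0$ of $GP(q,d)$, any clique of $\Gamma_1$---that is, a set all of whose pairwise differences lie in $D_1$---is an independent set of $\Gamma_0 = GP(q,d)$. Taking a maximum clique of $\Gamma_1$ and combining these two facts yields $\alpha\big(GP(q,d)\big) \ge \omega(\Gamma_1) = \omega\big(GP(q,d)\big)$, as required.

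Assembling the pieces gives $\omega\big(GP(q,d)\big)^2 \le q$, i.e. $\omega\big(GP(q,d)\big) \le \sqrt{q}$, which is the desired bound. I expect the only genuinely conceptual step to be the symmetry inequality $\alpha \ge \omega$: for the ordinary Paley graph ($d = 2$) this is exactly self-complementarity, but for $d > 2$ the graph $GP(q,d)$ is no longer self-complementary, and the correct replacement is the isomorphism between the cyclotomic relation graphs $\Gamma_i$ exhibited above. The clique--coclique bound itself I would cite as a standard black box rather than reprove, and the verification that $GP(q,d)$ is vertex-transitive and that the $\Gamma_i$ are mutually isomorphic is routine.
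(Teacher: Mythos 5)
Your proof is correct, but it takes a genuinely different route from the paper's. You combine the clique--coclique bound $\omega(X)\,\alpha(X)\le |V(X)|$ for vertex-transitive graphs with the cyclotomic symmetry: multiplication by $g^{-1}$ maps the Cayley graph with connection set $gD$ isomorphically onto $GP(q,d)$, and since $gD\cap D=\emptyset$ every clique of that graph is an independent set of $GP(q,d)$, giving $\alpha\ge\omega$ and hence $\omega^2\le\omega\alpha\le q$. All the steps check out: $q\equiv 1\pmod{2d}$ does force $-1\in D$, so each coset graph is a well-defined undirected Cayley graph, and the isomorphism and disjointness observations are sound; this is essentially one of the combinatorial/algebraic-graph-theory proofs alluded to in the paper's introduction (via the cited thesis). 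The paper instead argues by character sums: it first proves a finite-field version of Chung's inequality (Theorem \ref{t1.3}), namely $\bigl|\sum_{a\in A,\,b\in B}\chi(a+b)\bigr|\le\sqrt{q|A||B|}\,(1-|A|/q)^{1/2}(1-|B|/q)^{1/2}$, and then applies it with $B=-A$ and $\chi$ of order $d$, using that the sum equals $|A|^2-|A|$ when $A$ is a clique. The trade-off is this: your argument is shorter and more elementary (granting the clique--coclique bound as a black box), but the paper's character-sum proof is not chosen for economy --- it is the engine of the whole paper. Analyzing exactly when the triangle-inequality and Cauchy--Schwarz steps in that proof are tight is what produces Proposition \ref{ga} and Theorem \ref{main2}, the Fourier-analytic characterization of maximum cliques in terms of Gauss sums, which drives the proof of the main result (Theorem \ref{main}). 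Your approach yields the inequality but offers no comparable handle on the structure of the equality case.
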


This square root upper bound on the clique number is referred to as the {\em trivial upper bound} in the literature \cite{BMR, HP, MP, Yip2}; see \cite[Chapters 2]{thesis} for a survey of different proofs of Theorem \ref{trivial_ub} using tools from combinatorics and algebraic graph theory. For the sake of completeness, a simple proof of Theorem \ref{trivial_ub} will be given in Section \ref{PGC}. Throughout the paper, whenever we refer to the trivial upper bound, we are always referring to this square root upper bound on the clique number of a given generalized Paley graph. 

The current best-known upper bound on the clique number of a (generalized) Paley graph of order $q$ is $O(\sqrt{q})$, which is the same as the order of magnitude of the trivial upper bound. There were some very recent improvements on $\omega\big(GP(q,d)\big)$ for non-squares $q$ using polynomial methods. However, essentially there were no known improvements on $\omega\big(GP(q,d)\big)$ for squares $q$. See Section \ref{prev} for a survey of the previous works.

\subsection{Main results}
To improve the trivial upper bound, it is crucial to distinguish the square cases from the non-square case. In the case that $q$ is a non-square, $\sqrt{q}$ is not an integer and thus the trivial upper bound is never tight. While in the case that $q$ is a square, the subfield $\F_{\sqrt{q}}$ may form a maximum clique and consequently $\omega\big(GP(q,d)\big)$ attains the trivial upper bound; see also the discussion in Section \ref{algcon}. This phenomenon is known as an example of the so-called \emph{subfield obstruction}; see also Section \ref{PGC}. Thus, in the case that $q$ is a square, any attempt to improve the trivial upper bound must be able to overcome the subfield obstruction.

The following is our main result, which gives a necessary and sufficient condition for which the trivial upper bound on the clique being tight. 

\begin{thm}\label{main}
Let $d$ be a positive integer greater than $1$. Let $q \equiv 1 \pmod {2d}$ be an even power of a prime $p$. Then $\omega\big(GP(q,d)\big)=\sqrt{q}$ if and only if $d \mid (\sqrt{q}+1)$. In particular, if $d \nmid (\sqrt{q}+1)$, then $\omega\big(GP(q,d)\big) \leq \sqrt{q}-1$.
\end{thm}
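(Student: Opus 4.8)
The plan is to dispatch the ``if'' direction at once: if $d\mid(\sqrt q+1)$ then the subfield $\F_{\sqrt q}$ is a clique by Theorem \ref{t4}, so $\omega\big(GP(q,d)\big)=\sqrt q$. The content is the ``only if'' direction, which I would obtain by revisiting the character-sum proof of Theorem \ref{trivial_ub} and analysing its equality case. Fix a multiplicative character $\chi$ of $\F_q$ of exact order $d$, a nontrivial additive character $\psi$, and suppose $S\subseteq\F_q$ is a clique with $|S|=\sqrt q$. For $1\le j\le d-1$ the character $\chi^j$ is nontrivial, and since every nonzero difference in $S$ is a $d$-th power, $\sum_{x,y\in S,\,x\neq y}\chi^j(x-y)=\sqrt q(\sqrt q-1)$. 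Writing $f(t)=\sum_{x\in S}\psi(xt)$ and inserting the Gauss-sum inversion $\chi^j(u)=g(\overline{\chi^j})^{-1}\sum_{t\in\F_q}\overline{\chi^j}(t)\psi(ut)$, the left side becomes $g(\overline{\chi^j})^{-1}\sum_{t\neq0}\overline{\chi^j}(t)|f(t)|^2$. The triangle inequality with $|g(\overline{\chi^j})|=\sqrt q$ and Parseval ($\sum_{t\neq0}|f(t)|^2=\sqrt q(q-\sqrt q)$) reproves $\omega\le\sqrt q$; the key is that here \emph{equality} is attained, so the triangle inequality is an equality. Hence $\overline{\chi^j}(t)$ is constant on $T=\{t\neq0:f(t)\neq0\}$, and feeding this back shows there is a fixed $d$-th root of unity $c$ (a value of $\chi$ on $T$) with
\begin{equation}\label{eq:geom}
 g(\chi^j)=c^{\,j}\sqrt q\qquad(1\le j\le d-1).
\end{equation}
Thus a clique of size $\sqrt q$ forces all the $g(\chi^j)$ to be pure with \emph{geometric} phases; this is the only place the clique hypothesis enters, and \eqref{eq:geom} needs no assumption on $d$.

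It remains to show \eqref{eq:geom} forces $d\mid(\sqrt q+1)$; the case $d=2$ is immediate as $\sqrt q$ is odd, so assume $d\ge3$. The Frobenius relation $g(\chi^{p})=g(\chi)$ applied to \eqref{eq:geom} gives $c^{\,p-1}=1$. Next, \eqref{eq:geom} makes every $g(\chi^j)$ pure, and by the theory of pure Gauss sums (Stickelberger's theorem / uniform cyclotomy) this forces $p$ to be \emph{semiprimitive} modulo $d$: there is a least $j_0\ge1$ with $p^{j_0}\equiv-1\pmod d$. Since $d\mid q-1$ we get $\mathrm{ord}_d(p)=2j_0\mid 2m$, where $q=p^{2m}$, so $j_0\mid m$; setting $\ell=m/j_0$ gives $p^m\equiv(-1)^{\ell}\pmod d$. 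If $\ell$ is odd, then $p^m\equiv-1\pmod d$, i.e.\ $d\mid(\sqrt q+1)$, which is the desired conclusion.

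To eliminate $\ell$ even I would descend with Hasse--Davenport. As $p^{2j_0}\equiv1\pmod d$, the character factors as $\chi=\chi''\circ N$ through the norm $N\colon\F_q^{*}\to\F_{p^{2j_0}}^{*}$, with $\chi''$ of order $d$ and $\chi^j=\chi''^{\,j}\circ N$. Over $\F_{p^{2j_0}}$ the relation $p^{j_0}\equiv-1\pmod d$ makes $\chi''^{\,j}$ Frobenius-equivalent to its conjugate, so $g_{\F_{p^{2j_0}}}(\chi''^{\,j})$ is real; moreover $(p^{2j_0}-1)/d=(p^{j_0}-1)\cdot\frac{p^{j_0}+1}{d}$ is even, whence $\chi''(-1)=1$ and no purely imaginary value can occur. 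Therefore $g_{\F_{p^{2j_0}}}(\chi''^{\,j})=\pm p^{j_0}$. Applying Hasse--Davenport over the degree-$\ell$ extension, and using that $\ell$ is even to kill the sign, gives $g(\chi^j)=-\big(-g_{\F_{p^{2j_0}}}(\chi''^{\,j})\big)^{\ell}=-\sqrt q$ for \emph{every} $j$. Comparing with \eqref{eq:geom} forces $c^{\,j}=-1$ for all $1\le j\le d-1$, which is impossible once $d\ge3$ (take $j=1,2$). This contradiction rules out $\ell$ even and finishes the argument.

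I expect the decisive difficulty to be number-theoretic rather than combinatorial. The passage from the clique to \eqref{eq:geom} is a clean equality analysis, but two steps carry the real weight: cleanly invoking (or reproving) the implication ``all $g(\chi^j)$ pure $\Rightarrow$ $p$ semiprimitive mod $d$,'' and, most of all, controlling the signs in the Hasse--Davenport descent so that the even case collapses uniformly to the single value $-\sqrt q$. The bookkeeping must hold for \emph{every} power $\chi^j$, including those of smaller order, which is precisely why verifying $\chi''(-1)=1$ (excluding imaginary Gauss-sum values) is an essential rather than a cosmetic step. Note also that mere purity is not enough to distinguish $d\mid(\sqrt q+1)$ from $d\mid(\sqrt q-1)$ — both can give pure Gauss sums — so it is the geometric form of \eqref{eq:geom}, together with the parity of $\ell$, that does the separating.
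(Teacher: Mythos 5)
Your proof is correct, and its endgame takes a genuinely different route from the paper's. The two arguments coincide at the start: the ``if'' direction via Theorem \ref{t4}, the equality analysis of the character-sum bound (the paper's Proposition \ref{ga} and Theorem \ref{main2}), and the appeal to Evans's theorem (Theorem \ref{power}) to pass from purity of all $G(\chi^j)$ to semiprimitivity of $p$ modulo $d$. (One attribution quibble: that implication is due to Evans, not to Stickelberger/uniform cyclotomy, which give the converse direction; also note that purity for all $j\in\N$ includes the trivial power, whose Gauss sum is real, so Evans's hypothesis is met.) The divergence is in what is extracted from the equality case and how the proof is finished. The paper records only that each $\epsilon(\chi^j)$ is a $d$-th root of unity (Theorem \ref{main2}, Proposition \ref{stronger}); since $-1$ is such a root when $d$ is even, the paper must split off the odd part $d'$ of $d$, apply Proposition \ref{odd} to get $d'\mid(\sqrt{q}+1)$, and then invoke Sziklai's classification theorem (Theorem \ref{PS}, proved by polynomial methods) together with Lemma \ref{subfield} to upgrade the conclusion from $d'$ to $d$. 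You instead extract the stronger coherence relation $G(\chi^j)=c^j\sqrt{q}$ for $1\le j\le d-1$, with a single $d$-th root of unity $c$ (the common value of $\chi$ on the Fourier support of the clique) --- exactly the cross-power information that the paper's Theorem \ref{main2} does not record. With it you can kill the case $\ell=m/j_0$ even (equivalently $d\mid(\sqrt{q}-1)$) by pure Gauss-sum computation: the Hasse--Davenport descent to $\F_{p^{2j_0}}$ (in effect reproving the relevant case of Theorem \ref{forr}) yields $G(\chi^j)=-\sqrt{q}$ for every $j$, which contradicts $c^j=-1$ at $j=1,2$ once $d\ge 3$; your verification that $\chi''(-1)=1$, excluding purely imaginary values, is indeed the essential point there. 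The trade-off: the paper keeps the Gauss-sum evaluations black-boxed but depends on the deep classification Theorem \ref{PS}, whereas your argument is independent of Theorem \ref{PS} and Lemma \ref{subfield}, showing the Fourier/Gauss-sum machinery alone suffices, at the price of a sharper equality analysis and careful sign bookkeeping. (Cosmetic: the relation $c^{p-1}=1$ you derive from Frobenius invariance is never used afterwards.)
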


Theorem \ref{main}, which will be proved in Section \ref{proof}, strengthens the previous related results by Broere-D\"oman-Ridley and Schneider-Silva; see Theorem \ref{t4} and Theorem \ref{tss}. Note that if $d \nmid (\sqrt{q}+1)$, we managed to get a non-trivial upper bound on the clique number. We will see in Section \ref{prev} that even improving the trivial upper bound by 1 is very difficult, especially if $q$ is a square. 

One key ingredient of the proof of Theorem \ref{main} is Theorem \ref{main2}, which gives a Fourier analytic characterization of maximum cliques in a generalized Paley graph. Before stating Theorem \ref{main2}, we recall the definition of Fourier transforms over the finite field $\F_q$. For any prime $p$, and any $x \in \F_p$, we follow the standard notation that $e_p(x)=e(2 \pi i x/p)$, where we embed $\F_p$ into $\Z$. 

\begin{defn}
If $f:\F_q \to \mathbb{C}$, for each $c \in \F_q$, we define
$$
\hat{f}(c):=\frac{1}{q} \sum_{x \in \F_q} e_p\big(\Tr_{\F_q/\F_p}(cx)\big) f(x).
$$
\end{defn}

We will focus on the case that $f=\mathbbm{1}_A$, the indicator function of a subset $A$ of $\F_q$. In this case, the Fourier coefficient $\widehat{\mathbbm{1}_A}(c)$ is simply an exponential sum over $A$.

\begin{thm} \label{main2}
Let $d$ be a positive integer greater than $1$. Let $q \equiv 1 \pmod {2d}$ be an even power of a prime $p$ and let $\chi$ be a multiplicative character of $\F_q$ with order $d$. Let $A$ be a subset of $\F_q$ with $|A|=\sqrt{q}$. Then $A$ is a maximum clique in the generalized Paley graph $GP(q,d)$ if and only if for any $c \in \F_q^*$, the Gauss sum $G(\chi)$ and $\chi(c) |\widehat{\mathbbm{1}_A}(c)|^2$ share the same argument (as complex numbers).
\end{thm}

We will introduce $G(\chi)$ in Definition \ref{defn:Gauss sum}. In Section \ref{main2proof}, we will see that the proof of Theorem \ref{main2} only requires basic properties of trace functions and Gauss sums.

Theorem \ref{main2} provides a nice connection between Gauss sums and generalized Paley graph.  As a quick application of Theorem \ref{main2}, we give a new simple proof of the classical Stickelberger's theorem on explicit formulas of semi-primitive Gauss sums (see Theorem \ref{stick1}), which may be of independent interest to some readers. Theorem \ref{main2} also suggests a new approach to prove the known classification of maximum cliques in $GP(q,d)$, where $d \mid (\sqrt{q}+1)$; see Section \ref{embed} and Section \ref{maxclique}.

\subsection{Previous works on improving the trivial upper bound} \label{prev}
There was practically no improvement on the trivial upper bound until 2006, where Maistrelli and Penman \cite{MP} showed that if $q$ is a non-square such that $q \equiv 1 \pmod 4$, and $q>5$, then $\omega(P_q) \leq \sqrt{q-4}$. In 2013, Bachoc, Matolcsi, and Ruzsa \cite{BMR} showed that $\omega(P_q) \leq \sqrt{q}-1$ for half of the non-squares $q$. In 2019, Hanson and Petridis \cite{HP} used Stepanov's method to show that $\omega\big(GP(p,d)\big)\leq \sqrt{p/d}+O(1)$ for any $d \mid (p-1)$. Later, the author \cite{Yip2} were able to generalize their method to finite fields by analyzing binomial coefficients. In \cite[Section 5.3]{Yip2}, we proved the following theorem, which potentially leads to an improved upper bound on the clique number of any generalized Paley graph. 

\begin{thm}[\cite{Yip2}] \label{t5}
If $q\equiv 1 \pmod{2d}$, and $2 \leq n\leq N=\omega\big(GP(q,d)\big)$ satisfies
$
\binom{n-1+\frac{q-1}{d}}{\frac{q-1}{d}}\not \equiv 0 \pmod p,
$
then $(N-1)n \leq \frac{q-1}{d}$.
\end{thm}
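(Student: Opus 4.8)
The plan is to use Stepanov's polynomial method, in the form developed by Hanson and Petridis and adapted to $\F_q$ through Hasse derivatives. Since $GP(q,d)$ is a Cayley graph and hence vertex-transitive, I may translate a maximum clique $A$ so that $0 \in A$; moreover, scaling by the inverse of any nonzero clique element (which is a $d$-th power, hence induces an edge-preserving automorphism) lets me also assume $1 \in A$. Writing $m = \frac{q-1}{d}$, the clique condition becomes the statement that $A\setminus\{0\}$ consists of $m$-th roots of unity and, more strongly, that $(a-b)^m = 1$ for all distinct $a,b \in A$; equivalently, for each base point $c \in A$ the polynomial $(x-c)^m - 1$ vanishes at every other point of $A$.

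The goal is then to produce a single nonzero polynomial $F \in \F_q[x]$ with two competing properties: its degree is at most $m$, while it vanishes to order at least $n$ at each of the $N-1$ points of $A \setminus \{0\}$. Granting this, the elementary inequality that the number of roots counted with multiplicity cannot exceed the degree gives $n(N-1) \le \deg F \le m = \frac{q-1}{d}$, which is exactly the assertion. Thus the entire content of the proof is the construction of $F$.

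To build $F$, I would take $n$ base points from the sub-clique guaranteed by $n \le N$ and assemble $F$ out of the translated power polynomials $(x - a_i)^m$, repeatedly using the relation $x^m \equiv 1$ (valid at the clique points) to reduce high-degree monomials and so keep $\deg F \le m$. The order-$n$ vanishing at each point of $A \setminus \{0\}$ is arranged by forcing the first $n$ Hasse derivatives of $F$ to vanish there, which is where the clique relations $(x-a_i)^m - 1 \equiv 0$ on $A$ get consumed. The nontriviality $F \not\equiv 0$ is the delicate point: after the reductions, the coefficient of the surviving top-degree term turns out to be a nonzero scalar multiple of $\binom{n-1+m}{m}$. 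This is most transparent through the Hasse-derivative identity $D^{(n-1)}(x^{m+n-1}) = \binom{m+n-1}{m}\, x^{m}$, which shows precisely that this binomial coefficient governs whether the leading term persists after differentiating $n-1$ times. The hypothesis $\binom{n-1+m}{m} \not\equiv 0 \pmod p$ therefore guarantees $F \neq 0$.

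I expect the main obstacle to be exactly this construction step: exhibiting $F$ explicitly, verifying simultaneously that its degree does not exceed $m$ (with no stray additive constant that would weaken the bound) and that it vanishes to order $n$ at all $N-1$ nonzero clique points, and — hardest — isolating the sole obstruction to $F \equiv 0$ as the single binomial coefficient $\binom{n-1+m}{m}$ modulo $p$. Handling multiplicities correctly forces the use of Hasse derivatives rather than ordinary derivatives throughout, since we work in characteristic $p$; the normalization $1 \in A$ together with a judicious choice of base points should be what saves the additive constants in the degree count and makes the leading-coefficient computation collapse precisely to the stated binomial.
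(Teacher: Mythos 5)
You have correctly identified the family of argument: this theorem is not proved in the present paper (it is quoted from \cite{Yip2}), and the proof there is exactly a Hanson--Petridis/Stepanov construction over $\F_q$ with Hasse derivatives, with the binomial coefficient $\binom{n-1+m}{m}$ (where $m=\frac{q-1}{d}$) arising precisely from $D^{(n-1)}(x^{m+n-1})=\binom{m+n-1}{n-1}x^m$ as the obstruction to nontriviality. However, the two concrete mechanisms you propose for building the auxiliary polynomial would both fail, and they are the entire content of the proof.

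First, you cannot keep $\deg F\le m$ by ``repeatedly using the relation $x^m\equiv 1$ (valid at the clique points) to reduce high-degree monomials.'' That relation is pointwise, not a polynomial identity: replacing $F$ by its reduction modulo $x^m-1$ preserves values at clique points but destroys derivative information, so a root of multiplicity $n$ generally becomes a root of multiplicity $1$. Since the conclusion is extracted by counting roots \emph{with multiplicity}, any degree reduction of this kind is fatal. In the actual proof the polynomial genuinely has top exponent $m+n-1$, namely
\[
P(x)=\sum_{i=1}^{n}c_i\Big((x-a_i)^{m+n-1}-(x-a_i)^{n-1}\Big),
\]
where $a_1,\dots,a_n$ are clique points, and its degree drops to exactly $m$ not by reduction but by cancellation: the coefficient conditions below kill the coefficients of $x^{m+n-1},\dots,x^{m+1}$, and the coefficient of $x^m$ equals $\binom{m+n-1}{n-1}\gamma$ (up to sign) with $\gamma=\sum_i c_i a_i^{n-1}\neq 0$; this is the one and only place the hypothesis $\binom{n-1+m}{m}\not\equiv 0\pmod p$ is used.

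Second, you propose to arrange order-$n$ vanishing at the $N-1$ nonzero clique points ``by forcing the first $n$ Hasse derivatives of $F$ to vanish there.'' Taken literally this is a homogeneous linear system of $n(N-1)$ conditions in only $n$ unknowns $c_1,\dots,c_n$, which in general has no nonzero solution. The key idea you are missing is that one imposes only $n-1$ conditions, the moment (Vandermonde) conditions $\sum_i c_i a_i^{t}=0$ for $t=0,\dots,n-2$, which admit a nonzero solution because the $a_i$ are distinct; the high-order vanishing at \emph{all} clique points then comes for free from the clique relations. Indeed, for a clique point $b\notin\{a_1,\dots,a_n\}$ and $0\le j\le n-1$, using $(b-a_i)^m=1$ one gets
\[
D^{(j)}P(b)=\Big(\binom{m+n-1}{j}-\binom{n-1}{j}\Big)\sum_{i=1}^{n}c_i(b-a_i)^{\,n-1-j},
\]
and the inner sum is a polynomial in $b$ of degree at most $n-1-j$ whose coefficients all vanish by the moment conditions when $j\ge 1$, while for $j=0$ the two binomial coefficients cancel; the exponent $m+n-1$ is exactly what allows $n-1$ Hasse derivatives to be absorbed this way. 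The bookkeeping also differs from your sketch: one obtains multiplicity $\ge n$ only at the $N-n$ clique points outside $\{a_1,\dots,a_n\}$ and multiplicity $\ge n-1$ at the $n$ chosen points, and it is the total count $n(N-n)+(n-1)n=n(N-1)\le\deg P=m$ that gives the theorem, not order-$n$ vanishing at all $N-1$ nonzero points. Without these ingredients --- the two-term shape of $P$, the restriction to $n-1$ moment conditions, and the mixed multiplicity count --- the plan as written cannot be completed.
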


To apply Theorem \ref{t5}, we need to understand the base-$p$ representation of $\frac{q-1}{d}$, or equivalently, the the order of $p$ modulo $d$. Once the order is determined, we can perfrom an ad-hoc analysis. For example, we can use Theorem \ref{t5} to show that $\omega\big(GP(q,d)\big)\leq \sqrt{q/d}(1+o(1))$ for any $d \mid (q-1)$ and non-square $q$, which is almost as good as Hanson-Petridis bound; see \cite[Theorem 5.10]{Yip2}. However, when the number of divisors of $\phi(d)$ is large, the analysis will become very complicated.

In the case that $q$ is a square, we have seen that the $\sqrt{q}$ bound might be tight, and thus the $\sqrt{q}$ bound is unlikely to be improved using this method in general. Nevertheless, in the special case that $d\geq 3$ and $d \mid (p-1)$, we used Theorem \ref{t5} to prove that $\omega\big(GP(q,d)\big) < \sqrt{\frac{q}{d}} \big(1+O(d^{-1/2})\big)$. Note that the conditions $d \geq 3$ and $d \mid (p-1)$ imply that $d \mid (\sqrt{q}-1)$, and thus $d \nmid (\sqrt{q}+1)$. Therefore, this is consistent with Theorem \ref{main} and shows that there is a space to improve the trivial upper bound when $d \nmid (\sqrt{q}+1)$.

Another approach to improve the trivial upper bound is to consider the number of directions determined by $C \times C$ in the affine Galois plane $AG(2,q)$, where $C$ is a clique in $GP(q,d)$. Recently,  Di Benedetto,  Solymosi, and White \cite{DSW} showed that $A\times B\subset AG(2,p)$ determines at least $|A||B| - \min\{|A|,|B|\} + 2$ directions, provided $|A||B|<p$. Using this result, they were able to recover the Hanson-Petridis bound on $GP(p,d)$.  In \cite{Yip2}, the author gave some generalization of this direction result to $AG(2,q)$ and showed that if $d \geq 3$, then for any function $h(x)=o(x)$, the bound $\omega\big(GP(p^{2r+1},d)\big) \leq p^{r+1/2}-h(p)$ holds for almost all primes $p$ such that $p^{2r+1} \equiv 1 \pmod {2d}$. However, this improved upper bound depends on the uniform distribution of the fractional part of $p^{r+1/2}$. If $q$ is a square, then $\sqrt{q}$ is always an integer and this approach fails to provide any improvement on the trivial upper bound.

\subsection{Structure of the paper}
The paper is organized as follows. In Section \ref{background}, we provide additional background and introduce several independent techniques to study maximum cliques in generalized Paley graphs. In Section \ref{nota}, we will introduce trace functions, Gauss sums, and their basic properties. In Section \ref{prep}, we will prove Theorem \ref{main} and Theorem \ref{main2}. We will also give a simple proof of Stickelberger's theorem and provide a characterization of maximum cliques involving only trace functions.

\section{Background and overview of the paper} \label{background}
In this section, we put our main results into context. We introduce several important yet independent tools to study generalized Paley graphs. A special case of Theorem \ref{main} will be proved in Corollary \ref{even}. In Section \ref{prep}, we will combine these tools (together with classical results on Gauss sums in Section \ref{nota}) to prove Theorem \ref{main}. 

\subsection{Algebraic constructions for the cliques} \label{algcon}
Numerical data for primes $p < 10000$ by Exoo~\cite{EX} suggests that $\omega(P_p)$ behaves like a polylogarithmic function, and it is widely believed $\omega(P_p) \ll_{\epsilon} p^{\epsilon}$ for any $\epsilon>0$. Some people even conjectured $\omega(P_p)=O(\log p \log \log p)$; see the discussion on these bounds as well as the lower bounds on the clique number in \cite{SC} and \cite[Section 3.5]{thesis}. 

More generally, for a given generalized Paley graph $GP(q,d)$, it is believed that $\omega\big(GP(q,d)\big)$ is bounded by a polylogarithmic function in $q$ unless we have some special algebraic constructions for the cliques in $GP(q,d)$; see \cite{Green} for a related discussion on the clique number of Cayley graphs defined on a cyclic group.

If $q$ is a proper prime power, then the finite field $\F_q$ contains some proper subfields. For the lower bound of the clique number, it is often helpful to consider whether a particular subfield of $\F_q$ forms a clique in $GP(q,d)$; see also \cite{AY, Yip3} for explicit subfield constructions of maximal cliques in certain generalized Paley graphs.  The following theorem is due to Broere, D\"oman, and Ridley.

\begin{thm}[\cite{BDR}]\label{t4}
Let $d$ be a positive integer greater than $1$. Let $q \equiv 1 \pmod {2d}$ be a power of an odd prime $p$. If $k$ is an integer such that $d \mid \frac{q-1}{p^k-1}$, then the subfield $\F_{p^k}$ forms a clique in $GP(q,d)$. In particular, if $q$ is a square and $d \mid (\sqrt{q}+1)$, then $\omega\big(GP(q,d)\big)=\sqrt{q}$.
\end{thm}

They proved Theorem \ref{t4} by showing that if $d \mid (\sqrt{q}+1)$, then the subfield $\F_{\sqrt{q}}$ forms a clique in $GP(q,d)$ and thus the clique number of $GP(q,d)$ is $\sqrt{q}$. The following lemma gives a necessary and sufficient condition for $\F_{\sqrt{q}}$ to form a clique in $GP(q,d)$. 

\begin{lem}\label{subfield}
Let $d$ be a positive integer greater than $1$. Let $q \equiv 1 \pmod {2d}$ be an even power of a prime $p$. The subfield $\F_{\sqrt{q}}$ forms a clique in $GP(q,d)$ if and only if $d \mid (\sqrt{q}+1)$.
\end{lem}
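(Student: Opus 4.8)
The plan is to recast the statement that $\F_{\sqrt{q}}$ forms a clique as a purely multiplicative condition inside $\F_q^*$, and then read it off from the cyclic structure of the multiplicative group; both implications will follow at once from a single chain of equivalences, so there is no need to argue the two directions separately.

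First I would note that, since $\F_{\sqrt{q}}$ is closed under subtraction, the set of differences $\{x-y : x,y \in \F_{\sqrt{q}},\ x \neq y\}$ is exactly $\F_{\sqrt{q}} \setminus \{0\}$. As two vertices of $GP(q,d)$ are adjacent precisely when their difference is a $d$-th power in $\F_q^*$, it follows that $\F_{\sqrt{q}}$ forms a clique if and only if every element of $\F_{\sqrt{q}} \setminus \{0\}$ is a $d$-th power in $\F_q^*$.

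Next I would apply the standard criterion that, since $d \mid q-1$, an element $x \in \F_q^*$ is a $d$-th power if and only if $x^{(q-1)/d}=1$. The group $\F_{\sqrt{q}} \setminus \{0\}$ is cyclic of order $\sqrt{q}-1$, so every one of its elements satisfies $x^{(q-1)/d}=1$ if and only if its order $\sqrt{q}-1$ divides the exponent $(q-1)/d$. Hence the clique condition is equivalent to $(\sqrt{q}-1) \mid (q-1)/d$. Writing $q-1=(\sqrt{q}-1)(\sqrt{q}+1)$ and dividing by $\sqrt{q}-1$ turns this into $(\sqrt{q}+1)/d \in \Z$, that is, $d \mid (\sqrt{q}+1)$, which is the desired equivalence.

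The argument is essentially elementary, so I expect no serious obstacle; the points requiring care are the reduction in the first step, where one must use that $\F_{\sqrt{q}}$ is an additive subgroup so that its difference set is all of $\F_{\sqrt{q}} \setminus \{0\}$, and the final divisibility manipulation, where one should check that the equivalence $(\sqrt{q}-1)\mid(q-1)/d \iff d\mid(\sqrt{q}+1)$ is genuinely reversible (both quantities being positive integers, the quotient $(q-1)/\big(d(\sqrt{q}-1)\big)=(\sqrt{q}+1)/d$ has the stated closed form). I would also remark that the forward implication recovers the clique construction already contained in Theorem \ref{t4}, so the new content is the converse: that $d \mid (\sqrt{q}+1)$ is necessary for $\F_{\sqrt{q}}$ to be a clique.
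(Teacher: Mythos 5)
Your proof is correct and follows essentially the same route as the paper: both reduce the clique condition to every element of $\F_{\sqrt{q}}\setminus\{0\}$ being a $d$-th power in $\F_q^*$ (using closure under subtraction), then invoke the cyclic structure of $\F_{\sqrt{q}}^*$ and the criterion $x^{(q-1)/d}=1$ to arrive at $(\sqrt{q}-1)\mid(q-1)/d$, i.e.\ $d\mid(\sqrt{q}+1)$. The only cosmetic difference is that the paper phrases the cyclic-group step via a generator $g$ of $\F_{\sqrt{q}}^*$ rather than via the exponent of the group, which is an equivalent formulation.
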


\begin{proof}
By definition, the subfield $\F_{\sqrt{q}}$ forms a clique in $GP(q,d)$ if and only if $a-b$ is a $d$-th power in $\F_q^*$, whenever $a,b \in \F_{\sqrt{q}}$ and $a \neq b$, which is equivalent to any nonzero element in $\F_{\sqrt{q}}$ is a $d$-th power in $\F_q^*$ since the subfield $\F_{\sqrt{q}}$ is closed under subtraction, and $0 \in \F_{\sqrt{q}}$. Let $g$ be the primitive root in $\F_{\sqrt{q}}$, then the order of $g$ is $\sqrt{q}-1$. Since $\F_{\sqrt{q}}^*$ is generated by $g$, the subfield $\F_{\sqrt{q}}$ forms a clique in $GP(q,d)$ if and only if $g$ is a $d$-th power in $\F_q^*$, which is equivalent to $g^{(q-1)/d}=1$, and also equivalent to $(\sqrt{q}-1) \mid \frac{q-1}{d}$, i.e., $d \mid (\sqrt{q}+1)$. 
\end{proof}

Recall that the {\em chromatic number} of a graph $X$, denoted $\chi(X)$, is the smallest number of colors needed to color the vertices of $X$ so that no two adjacent vertices share the same color. In fact, Broere, D\"oman, and Ridley \cite{BDR} showed that if $d \mid (\sqrt{q}+1)$, then both the chromatic number and the clique number of $GP(q,d)$ is $\sqrt{q}$. In \cite{SS}, Schneider and Silva studied generalized Paley graphs whose clique and chromatic numbers coincide using the synchronization property for permutation groups, and one of their main results is the following.

\begin{thm}[\cite{SS}] \label{tss}
Let $d$ be a positive integer greater than $1$. Let $q \equiv 1 \pmod {2d}$ be an even power of a prime $p$. Then $\chi\big(GP(q,d) \big)=\omega\big(GP(q,d)\big)=\sqrt{q}$ if and only if $d \mid (\sqrt{q}+1)$.
\end{thm}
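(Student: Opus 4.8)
The plan is to treat the two implications separately, deriving the easy direction from an explicit coloring and reducing the hard direction to a Gauss-sum analysis. Throughout I write $s := \sqrt{q}$, so that $q = s^2$ and $\F_q$ is a two-dimensional vector space over the subfield $\F_s$, and I let $H \subseteq \F_q^*$ denote the index-$d$ subgroup of nonzero $d$-th powers, so that $GP(q,d)$ is the Cayley graph on $(\F_q,+)$ with connection set $H$; in particular it is vertex-transitive.

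For the direction $d \mid (s+1) \Rightarrow \chi = \omega = \sqrt{q}$: by Theorem \ref{t4} the subfield $\F_s$ is already a clique, so $\omega\big(GP(q,d)\big)=s$ by Theorem \ref{trivial_ub}, and since $\chi \geq \omega$ for every graph it remains only to exhibit a proper $s$-coloring. First I would record that $\F_s^* \subseteq H$: any $x \in \F_s^*$ satisfies $x^{s-1}=1$, hence $x^{(q-1)/d}=(x^{s-1})^{(s+1)/d}=1$ because $(s+1)/d$ is an integer. Now fix any non-$d$-th power $\beta \in \F_q^* \setminus H$ (which exists as $d>1$) and take the additive cosets of the one-dimensional $\F_s$-subspace $V := \beta \F_s$. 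These $q/|V| = s$ cosets partition $\F_q$, and each is independent: the difference of two distinct elements of a coset equals $\beta t$ with $t \in \F_s^* \subseteq H$, so it lies in $\beta H$, which is disjoint from $H$ because $\beta \notin H$; hence no such difference is a $d$-th power. This gives $\chi\big(GP(q,d)\big) \leq s$, and therefore $\chi = \omega = s = \sqrt{q}$.

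For the converse I would first use vertex-transitivity to pin down the extremal structure. The clique--coclique bound gives $\omega\alpha \leq q$, while $\chi \geq \chi_f = q/\alpha$ for vertex-transitive graphs; feeding in the hypothesis $\chi = \omega = s$ forces $\alpha \geq s$ and $\alpha \leq q/\omega = s$, so $\alpha = s$ as well and all of $\omega,\alpha,\chi$ equal $s$ with the clique--coclique bound tight. In particular the trivial upper bound on the clique number is attained. The remaining, genuinely hard step is to show that this attainment forces $d \mid (s+1)$, and this is where the Gauss-sum machinery enters. Concretely, for a clique $C$ of size $s$ with $0 \in C$ I would expand $\sum_{x \neq y \in C}\mathbf{1}_H(x-y)=|C|(|C|-1)$ using $\mathbf{1}_H=\frac1d\sum_{\eta^d=\eta_0}\eta$ (valid on $\F_q^*$) and bound the non-principal sums $\sum_{x\neq y\in C}\eta(x-y)$ through the Gauss sums $g(\eta)$; the crude estimate reproduces the bound $\sqrt{q}$, and the equality case reduces to when the relevant Gauss sums are pure, a semiprimitivity condition that holds precisely when the order of $\eta$ divides $s+1$, i.e. when $d \mid (s+1)$.

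The main obstacle is exactly this last equality analysis: the crude character-sum bound is lossy, so attaining $\sqrt{q}$ exactly imposes that every non-principal character sum saturate simultaneously, and converting that saturation into the arithmetic condition $d \mid (s+1)$ requires the exact evaluation of Gauss sums over $\F_{s^2}$ rather than mere magnitude bounds. I expect the cleanest route is to phrase the saturation as tightness of the Delsarte--Hoffman ratio bound, so that the least eigenvalue of $GP(q,d)$ must be an integer; this integrality is the semiprimitive case, and Stickelberger-type information then shows it is equivalent to $d \mid (s+1)$, completing the converse.
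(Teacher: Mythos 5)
Your forward direction is correct and is essentially the Broere--D\"oman--Ridley argument: when $d \mid (\sqrt{q}+1)$ every element of $\F_{\sqrt{q}}^*$ is a $d$-th power, so $\F_{\sqrt{q}}$ is a clique, and the $\sqrt{q}$ additive cosets of $\beta\F_{\sqrt{q}}$ (for a fixed non-residue $\beta$) are independent sets, giving $\chi\big(GP(q,d)\big) \leq \sqrt{q} = \omega\big(GP(q,d)\big)$. For context: the paper itself does not prove Theorem \ref{tss} at all --- it is quoted from \cite{SS}, where it is proved via synchronization of permutation groups --- and since your converse discards the chromatic hypothesis after extracting $\alpha = \sqrt{q}$ (which you never use again), you are in effect attempting the strictly stronger Theorem \ref{main}, the paper's main result.

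That is where the genuine gap lies. Your converse rests on the chain ``equality forces pure Gauss sums $\Leftrightarrow$ semiprimitivity $\Leftrightarrow$ $d \mid (\sqrt{q}+1)$,'' and two of these links are false. Purity is strictly weaker than semiprimitivity: Evans \cite{Evans2} produced infinitely many pure non-supersingular Gauss sums, which is precisely why the paper must first prove Proposition \ref{stronger} (purity of $G(\chi^j)$ for \emph{all} $j$, via the embeddings $GP(q,d) \subseteq GP(q,d')$ for $d' \mid d$) before Theorem \ref{power} can be applied. Semiprimitivity is in turn strictly weaker than $d \mid (\sqrt{q}+1)$: with $-1 \equiv p^t \pmod{d}$ and $q = p^{2ts}$, one gets $d \mid (\sqrt{q}+1)$ only when $s$ is odd. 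Concretely, for $p=7$, $d=4$, $q=7^4$ the Gauss sum is supersingular and equals $-49$ by Theorem \ref{forr}, yet $4 \nmid 50$; so no amount of purity/semiprimitivity analysis alone can output the condition $d \mid (\sqrt{q}+1)$. This is why the paper's proof of Theorem \ref{main} additionally needs the explicit evaluation in Proposition \ref{odd}, and then Sziklai's classification (Theorem \ref{PS}) together with Lemma \ref{subfield} to close the loop. Your eigenvalue variant is broken for the same reason twice over: $\omega = \sqrt{q}$ alone gives only the one-sided inequality $|\lambda_{\min}| \leq (\sqrt{q}+1)/d$ from the clique ratio bound, not tightness, and ``integral least eigenvalue $\Rightarrow$ semiprimitive'' also fails (integral eigenvalues occur in subfield and sporadic cyclotomic cases as well). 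The irony is that the hypothesis you threw away is exactly what repairs this route, and it gives a proof needing no Gauss sums at all: for the $k$-regular graph $GP(q,d)$ with $k=(q-1)/d$, the ratio bound for cliques gives $\omega \leq 1 - k/\lambda_{\min}$ and Hoffman's bound gives $\chi \geq 1 - k/\lambda_{\min}$ (both in \cite{GM}), so $\omega = \chi = \sqrt{q}$ squeezes these to $1 - k/\lambda_{\min} = \sqrt{q}$, i.e.\ $\lambda_{\min} = -(\sqrt{q}+1)/d$; since the eigenvalues of a Cayley graph on $(\F_q,+)$ are sums of $p$-th roots of unity, hence algebraic integers, this rational number must be a rational integer, which is exactly $d \mid (\sqrt{q}+1)$. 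As written, however, your converse does not go through.
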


Intuitively, the trivial upper bound is tight only if the maximum cliques of $GP(q,d)$ have some special algebraic structure. Given Theorem \ref{t4}, Lemma \ref{subfield}, and Theorem \ref{tss}, it is tempting to conjecture that this special algebraic structure is the subfield structure. In other words, the heuristic is that the trivial upper bound on $\omega\big(GP(q,d)\big)$ is tight if and only if the subfield $\F_{\sqrt{q}}$ forms a clique in $GP(q,d)$. Our main result, Theorem \ref{main}, confirms this heuristic.

Note that Theorem \ref{main} strengthens Theorem \ref{t4}, as we give a necessary and sufficient condition for which the trivial upper bound is tight. Theorem \ref{main} also refines the result Theorem \ref{tss}, since it removes the assumption on the chromatic number.

\subsection{Character sums and the Paley Graph Conjecture}\label{PGC}
The starting point to prove Theorem \ref{main} is to understand a finite field analog of the Paley Graph Conjecture, as well as the character sum approach to recover the trivial upper bound on the clique number of a given generalized Paley graph.

Let $A,B$ be arbitrary subsets of the field  $\F_p$, and $\chi$ a non-principal Dirichlet character modulo $p$. Many authors have studied the following double character sum (see for example \cite{Chung, ES1, Kar})
\begin{equation}\label{f:def_sum}
    \sum_{a\in A,\, b\in B}\chi(a+b) \,,
\end{equation}
and their goal is to obtain good upper bounds on its absolute value. The following classical estimate is  due to Erd\H{o}s and Shapiro \cite{ES1}.

\begin{thm}[\cite{ES1}] \label{ES1}
If $\chi$ is a non-principal Dirichlet character modulo $p$, then for any $A,B \subset \F_p$, 
 $$
 \bigg|\sum_{a\in A,\, b\in B}\chi(a+b)\bigg|  \leq \sqrt{p|A||B|}.
 $$
\end{thm}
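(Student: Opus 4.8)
The plan is to derive the bound from a single application of the Cauchy--Schwarz inequality followed by orthogonality of the character $\chi$. Write $S = \sum_{a \in A} T(a)$ where $T(a) := \sum_{b \in B} \chi(a+b)$, so that $S$ is a sum over $a \in A$ of the inner quantity $T(a)$. Cauchy--Schwarz in the variable $a$ gives $|S|^2 \le |A| \sum_{a \in A} |T(a)|^2$, and since each summand is nonnegative I can enlarge the range of summation and bound $\sum_{a \in A} |T(a)|^2 \le \sum_{a \in \F_p} |T(a)|^2$. The point of extending to all of $\F_p$ is that the resulting sum becomes amenable to orthogonality, at the small cost of a factor that turns out to be harmless.

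Next I would expand the square and exchange the order of summation to obtain
$$\sum_{a \in \F_p} |T(a)|^2 = \sum_{b_1, b_2 \in B} \sum_{a \in \F_p} \chi(a+b_1)\overline{\chi(a+b_2)}.$$
The heart of the argument is evaluating the inner sum $I(b_1,b_2) := \sum_{a \in \F_p} \chi(a+b_1)\overline{\chi(a+b_2)}$. When $b_1 = b_2$ this equals $\sum_{a \ne -b_1} |\chi(a+b_1)|^2 = p-1$, since $\chi$ has modulus $1$ on $\F_p^*$ and vanishes at $0$. When $b_1 \ne b_2$, I would use $\overline{\chi(x)} = \chi(x^{-1})$ for $x \in \F_p^*$ to rewrite the nonvanishing summands as $\chi\big( (a+b_1)/(a+b_2) \big)$, and then perform the M\"obius substitution $t = (a+b_1)/(a+b_2)$: as $a$ runs over $\F_p \setminus \{-b_1,-b_2\}$, the parameter $t$ runs bijectively over $\F_p \setminus \{0,1\}$, whence $I(b_1,b_2) = \sum_{t \in \F_p^*} \chi(t) - \chi(1) = -1$ by the relation $\sum_{t \in \F_p^*} \chi(t) = 0$ valid for a non-principal $\chi$.

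Assembling the two cases, the $|B|$ diagonal pairs contribute $p-1$ each and the $|B|^2 - |B|$ off-diagonal pairs contribute $-1$ each, giving $\sum_{a \in \F_p} |T(a)|^2 = |B|(p-1) - (|B|^2 - |B|) = |B|(p - |B|) \le p|B|$. Combining this with the Cauchy--Schwarz step yields $|S|^2 \le |A|\,|B|(p-|B|) \le p\,|A|\,|B|$, which is exactly the asserted inequality (in fact marginally stronger, since one may retain the factor $p - |B|$ in place of $p$).

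I expect the only genuinely delicate step to be the off-diagonal evaluation of $I(b_1,b_2)$: one must follow the M\"obius map $a \mapsto (a+b_1)/(a+b_2)$ carefully through the projective line, verifying that the value $t=1$ (attained only ``at infinity'') is precisely the value omitted from the image, while the points $a = -b_1$ and $a = -b_2$, where $\chi$ vanishes, map to $t = 0$ and $t = \infty$ and are correctly discarded. Everything else is a routine combination of Cauchy--Schwarz and the orthogonality of a non-principal multiplicative character.
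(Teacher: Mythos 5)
Your proof is correct, and it takes a genuinely different route from the paper. The paper never proves Theorem \ref{ES1} in isolation: it cites it and instead proves the finite-field strengthening Theorem \ref{t1.3}, whose argument is Fourier-analytic. There one writes $\overline{\chi(a+b)}=G(\chi)^{-1}\sum_{c\in\F_q}\chi(c)e_p\big(\Tr((a+b)c)\big)$ via Lemma \ref{qf}, applies the triangle inequality and Cauchy--Schwarz in the \emph{frequency} variable $c$, and closes with the Parseval-type identity $\sum_{c\in\F_q^*}|S(q,A;c)|^2=q|A|-|A|^2$ (Lemma \ref{sumc}). You instead apply Cauchy--Schwarz in the \emph{physical} variable $a$, complete the sum over $a\in\F_p$, and evaluate the complete sum $\sum_{a}\chi(a+b_1)\overline{\chi(a+b_2)}$ by the M\"obius substitution $t=(a+b_1)/(a+b_2)$ together with $\sum_{t\in\F_p^*}\chi(t)=0$; your case analysis (diagonal terms give $p-1$, off-diagonal give $-1$, and the points $a=-b_1,-b_2$ where $\chi$ vanishes map to $t=0,\infty$) is handled correctly, and the count $|B|(p-1)-(|B|^2-|B|)=|B|(p-|B|)$ is right. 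This is essentially the classical Vinogradov-style doubling argument of Erd\H{o}s--Shapiro's original paper. What each approach buys: yours is more elementary (no Gauss sums, only orthogonality of a non-principal multiplicative character) and even retains one factor, giving $|S|^2\le |A|\,|B|(p-|B|)$; but it is asymmetric, since extending the $a$-sum to all of $\F_p$ discards the constraint $a\in A$, so it cannot recover both factors $(1-|A|/q)^{1/2}(1-|B|/q)^{1/2}$ of Theorem \ref{Chung1}/Theorem \ref{t1.3}. The paper's Gauss-sum route keeps both factors, works uniformly over any $\F_q$, and --- crucially for the rest of the paper --- exposes the equality case in terms of the coefficients $S(q,A;c)$, which is exactly what drives the maximum-clique characterization in Proposition \ref{ga} and Theorem \ref{main2}; your approach, while perfectly adequate for Theorem \ref{ES1} itself, does not localize where the loss occurs in Fourier space.
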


In \cite{Chung}, Chung improved the upper bound in Theorem \ref{ES1} by a multiplicative factor.

\begin{thm}[{\cite[Theorem 2.1]{Chung}}] \label{Chung1}
If $\chi$ is a non-principal Dirichlet character modulo $p$, then for any $A,B \subset \F_p$, 
 $$
 \bigg|\sum_{a\in A,\, b\in B}\chi(a+b)\bigg|  \leq \sqrt{p|A||B|}\bigg(1-\frac{|A|}{p}\bigg)^{1/2}\bigg(1-\frac{|B|}{p}\bigg)^{1/2}.
 $$
\end{thm}

For our purpose, we establish the following generalization of Theorem \ref{Chung1} to any finite field. Note that we need to replace the notion of Dirichlet characters modulo a prime by multiplicative characters in a finite field. Recall a character $\chi$ of the multiplicative group $\F_q^*$ of $\F_q$ is called a {\em multiplicative character} of $\F_q$. It is a custom to define $\chi(0)=0$. For a multiplicative character $\chi$, its order $d$ is the smallest positive integer such that $\chi^d=\chi_0$, where $\chi_0$ is the trivial multiplicative character of $\F_q$. Note that the order $d \mid (q-1)$, and we assume $d>1$ to avoid the trivial multiplicative character. Similar to the statement of Theorem \ref{Chung1}, it is necessary to assume that $\chi$ is non-trivial.

\begin{thm} \label{t1.3}
If $\chi$ is a non-trivial multiplicative character of $\F_q$, then for any $A,B \subset \F_q$, 
 $$
 \bigg|\sum_{a\in A,\, b\in B}\chi(a+b)\bigg|  \leq \sqrt{q|A||B|}\bigg(1-\frac{|A|}{q}\bigg)^{1/2}\bigg(1-\frac{|B|}{q}\bigg)^{1/2}.
 $$
\end{thm}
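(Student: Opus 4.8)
The plan is to follow the strategy underlying Chung's proof of Theorem \ref{Chung1}, carrying out the relevant character-sum computations directly over $\F_q$. For a subset $A \subseteq \F_q$ and the fixed non-trivial character $\chi$, I would introduce the function $f : \F_q \to \C$ defined by $f(b) = \sum_{a \in A} \chi(a+b)$, so that the quantity to be estimated is $S = \sum_{b \in B} f(b)$. The whole argument then rests on two ingredients: an exact second-moment identity for $f$ over the entire field, and a Cauchy--Schwarz step that exploits the mean-zero property of $f$.

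First I would establish the identity $\sum_{b \in \F_q} |f(b)|^2 = |A|(q - |A|)$. Expanding the square gives $\sum_{a, a' \in A} \sum_{b \in \F_q} \chi(a+b)\overline{\chi(a'+b)}$. Each diagonal term $a = a'$ contributes $q-1$, the number of $b$ with $a+b \neq 0$. For an off-diagonal term $a \neq a'$, the substitution $u = a'+b$ followed by $v = (a-a')/u$ reduces the inner sum to $\sum_{v \in \F_q^*}\chi(1+v) = \sum_{w \neq 1}\chi(w) = -1$, using $\sum_{w \in \F_q}\chi(w) = 0$ together with $\chi(1)=1$ and $\chi(0)=0$. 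Summing the $|A|$ diagonal and $|A|(|A|-1)$ off-diagonal contributions yields the stated identity, which I rewrite as $|A| q(1 - |A|/q)$.

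Next I would record the elementary but crucial fact that $f$ has mean zero: $\sum_{b \in \F_q} f(b) = \sum_{a \in A}\sum_{b \in \F_q}\chi(a+b) = 0$. This allows me to replace $\mathbbm{1}_B$ by its mean-centered version, writing $S = \sum_{b \in \F_q}\big(\mathbbm{1}_B(b) - |B|/q\big)f(b)$, since the constant part $|B|/q$ annihilates against the mean-zero $f$. Applying Cauchy--Schwarz and computing $\sum_{b \in \F_q}\big(\mathbbm{1}_B(b) - |B|/q\big)^2 = |B|(1 - |B|/q)$, then combining with the second-moment identity, produces $|S|^2 \leq q|A||B|(1 - |A|/q)(1 - |B|/q)$, which is exactly the claimed bound after taking square roots.

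The main obstacle is the exact evaluation of the off-diagonal character sums in the second moment; once that identity is secured, the remainder is routine. The decisive mileage comes from two places: extracting the full strength of orthogonality so that the off-diagonal sum is exactly $-1$ rather than merely $O(1)$, which yields the factor $1 - |A|/q$; and symmetrizing through the mean-centering of $\mathbbm{1}_B$, which yields the companion factor $1 - |B|/q$. This second step is precisely what upgrades the estimate beyond the Erd\H{o}s--Shapiro bound of Theorem \ref{ES1} and beyond the one-sided improvement that a naive Cauchy--Schwarz in the variable $b$ would give.
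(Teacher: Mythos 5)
Your proof is correct, but it follows a genuinely different route from the paper's. You work entirely on the ``physical'' side: you evaluate the complete second moment $\sum_{b\in\F_q}|f(b)|^2=|A|(q-|A|)$ by exact orthogonality of the multiplicative character (each off-diagonal pair contributing exactly $-1$), and you extract the second savings factor by mean-centering $\mathbbm{1}_B$ against the mean-zero function $f$ before applying Cauchy--Schwarz in the variable $b$. The paper instead passes to the frequency side: it expands $\overline{\chi(a+b)}$ in additive characters via the Gauss sum inversion formula (Lemma \ref{qf}), uses $|G(\chi)|=\sqrt{q}$ from Lemma \ref{gq}, factors the resulting exponential sum as $S(q,A;c)\,S(q,B;c)$, applies the triangle inequality and Cauchy--Schwarz over the frequency variable $c$, and finishes with the Parseval-type identity of Lemma \ref{sumc} applied symmetrically to both $A$ and $B$. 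The two computations are dual to one another (your second-moment identity is essentially Lemma \ref{sumc} in disguise, via Plancherel and the fact that every nonzero Fourier coefficient of $\chi$ has modulus $q^{-1/2}$), but the trade-offs are real: your argument is more elementary and self-contained, needing no Gauss sum theory at all, only $\sum_{w\in\F_q}\chi(w)=0$; the paper's frequency-side argument, on the other hand, exposes the coefficients $S(q,A;c)$ explicitly, and it is precisely this form that is later sharpened into the equality analysis of Proposition \ref{ga} and Theorem \ref{main2} (all nonzero $\chi(c)|S(c)|^2$ sharing the argument of $G(\chi)$), which is the engine for the rest of the paper. Your physical-side proof yields the same inequality, but its equality case (proportionality of $\mathbbm{1}_B-|B|/q$ and $\overline{f}$ in Cauchy--Schwarz) would need additional reworking to recover that Fourier-analytic characterization of maximum cliques.
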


The Paley Graph Conjecture is a well-known hypothesis on sums of the form (\ref{f:def_sum}); see for example \cite[Conjecture 2.2]{Chung}. 

\begin{conj}[Paley Graph Conjecture] \label{PC}
  Let $\delta>0$, and let $A,B\subset\mathbb{F}_p$ be arbitrary sets with  $|A|>p^{\delta}$ and $|B|>p^\delta$. Then there exist $c(\delta), \tau(\delta)>0$ such that for any sufficiently
 large prime number $p$ and any non-principal Dirichlet character $\chi$ modulo $p$,
\begin{equation} \label{Paleya}
    \bigg|\sum_{a\in A,\, b\in B}\chi(a+b)\bigg| \leq c(\delta) p^{-\tau}|A||B| \,.
\end{equation}
\end{conj}

For our purpose, we should instead consider the finite field analog of Conjecture \ref{PC}. The case $\delta>1/2$ has been confirmed by Theorem \ref{t1.3}. However, as Chung \cite{Chung} remarked, the case $\delta< 1/2$ will no longer hold due to subfield obstructions. For example, if $A=B$ are proper subfields of $\F_q$, and $\chi$ is a non-trivial multiplicative character such that $\chi|_A$ is trivial, then inequality \eqref{Paleya} fails to hold.

To see the connection between the Paley Graph Conjecture and the clique number of generalized Paley graphs, we associated a $d$-Paley graph $GP(q,d)$ with a multiplicative character in $\F_q$ with order $d$. In particular, we can apply Theorem \ref{t1.3} to recover Theorem \ref{trivial_ub}.

\begin{proof}[Proof of Theorem \ref{trivial_ub}]
Let $\chi$ be a multiplicative character of $\F_q$, with order $d$. Let $A$ be a maximum clique in $GP(q,d)$, and let $B=-A$. Then for any $a \in A, b \in B$, either $a+b=0$ or $a+b$ is a $d$-th power in $\F_q^*$; in the latter case, we have $\chi(a+b)=1$. Therefore, 
$$\sum_{a\in A,\, b\in B}\chi(a+b)=|A|^2-|A|.$$ 
Combining Theorem \ref{t1.3},  we get $|A|^2-|A| \leq \sqrt{q} |A|(1-\frac{|A|}{q})$, which implies $\omega\big(GP(q,d)\big)=|A|\leq \sqrt{q}$.
\end{proof}

We remarked that several different authors have succeeded in using character sums to study generalized Paley graphs; see for example \cite{AY, SC}. The character sum approach to tackle the estimates of clique number looks promising since Conjecture \ref{PC} would immediately imply a much better upper bound. Unfortunately, at the moment we do not understand Conjecture \ref{PC} very well; even the case $|A|\sim |B|\sim p^{\frac12}$ for inequality~(\ref{Paleya}) is unknown (see \cite{Kar} for a related discussion). 

\subsection{Embeddings among generalized Paley graphs}\label{embed}
The following simple observation is crucial for the proof of our main results.

\begin{lem}\label{embedding}
Let $GP(q,d)$ and $GP(q,d')$ be generalized Paley graphs. If $d' \mid d$, then $GP(q,d)$ is a subgraph of $GP(q,d')$; in particular, if $\omega\big(GP(q,d)\big)=\sqrt{q}$, then $\omega\big(GP(q,d')\big)=\sqrt{q}$, and every maximum clique in $GP(q,d)$ is also a maximum clique in $GP(q,d')$.
\end{lem}
\begin{proof}
Suppose $d' \mid d$. Then every $d$-th power in $\F_q^*$ is also a $d'$-th power, and thus $GP(q,d)$ is a subgraph of $GP(q,d')$ follows from the definition of generalized Paley graphs. Therefore, $\omega\big(GP(q,d)\big)\leq \omega\big(GP(q,d')\big) \leq \sqrt{q}$, and any clique in $GP(q,d)$ is a clique in $GP(q,d')$. In particular, if $\omega\big(GP(q,d)\big)=\sqrt{q}$, then $\omega\big(GP(q,d)\big)=\omega\big(GP(q,d')\big)=\sqrt{q}$, so every maximum clique in $GP(q,d)$ is also a maximum clique in $GP(q,d')$. 
\end{proof}

If $q$ is a square and $d \mid (\sqrt{q}+1)$, the following theorem characterizes all maximum cliques in $GP(q,d)$.

\begin{thm}[{\cite[Theorem 1.2]{PS}}] \label{PS}
If $q$ is a square and $d$ is a divisor of $(\sqrt{q}+1)$ such that $d>1$, then in the generalized Paley graph $GP(q,d)$, the only maximum clique containing $0,1$ is the subfield $\F_{\sqrt{q}}$; in other words, any maximum clique in $GP(q,d)$ is an affine transformation of $\F_{\sqrt{q}}$.
\end{thm}

The proof of Theorem \ref{PS} used discrete geometry and polynomial method, in particular a careful study of symmetric polynomials. Theorem \ref{PS} was first proved by Blokhuis \cite{AB} for Paley graphs. Later, Sziklai \cite{PS} generalized Blokhuis' proof. 

It is interesting to produce such a proof using tools from algebraic graph theory. Paley graphs are instances of strongly regular graphs and there are many algebraic graph theory approaches to recover the trivial upper bound on the clique number. Thus, it is natural to ask when is the trivial upper bound tight, or equivalently, how to classify the maximum cliques. Godsil and Meagher \cite{GM} suggest an algebraic approach to prove Theorem \ref{PS} using eigenvalues and eigenspaces in the setting of applying ratio bounds to a strongly regular graph; see the discussion in \cite[Section 5.9]{GM}. 

We are most interested in finding a proof of Theorem \ref{PS} using character sums. Recently, Asgarli and the author \cite{AY} were able to prove a slightly weaker version of Theorem \ref{PS} using classical tools on the direction set determined by a point set and character sum estimates over subspaces. Theorem \ref{main2} also suggests a Fourier analytic approach to prove Theorem \ref{PS}, however, at this moment we are not aware of how to make use of this new approach; see Section \ref{maxclique} for an equivalent characterization involving only the trace functions.

We finish the section by showing that if $d$ is even, then Theorem \ref{main} holds. The proof is a nice combination of Lemma \ref{embedding} and Theorem \ref{PS}.

\begin{cor}\label{even}
Let $q$ be a square such that $q \equiv 1 \pmod {2d}$ and $d \nmid (\sqrt{q}+1)$. If $\gcd(d, \sqrt{q}+1)>1$, then $\omega\big(GP(q,d)\big)\leq \sqrt{q}-1$. In particular, if $d$ is even, then $\omega\big(GP(q,d)\big)\leq \sqrt{q}-1$.
\end{cor}
\begin{proof}
Let $d'=\gcd(d,\sqrt{q}+1)$. Assume $d'>1$; then $d' \mid d$, $d' \mid (\sqrt{q}+1)$, and the generalized Paley graph $GP(q,d')$ is well-defined. Suppose $\omega\big(GP(q,d)\big)=\sqrt{q}$. Let $C$ be a maximum clique in $GP(q,d)$. Without loss of generality, we may assume that $0 \in C$, and thus each nonzero element in $C$ is a $d$-th power in $\F_q^*$. Let $x \in C \setminus \{0\}$, then $C'=x^{-1}C$ is still a (maximum) clique in $GP(q,d)$; moreover, $0,1 \in C'$. By Lemma \ref{embedding}, $C'$ is also a maximum clique in $GP(q,d')$. It follows from Theorem \ref{PS} that $C'$ is the subfield $\F_{\sqrt{q}}$, i.e., the subfield $\F_{\sqrt{q}}$ forms a clique in $GP(q,d)$, which contradicts the assumption $d \nmid (\sqrt{q}+1)$ in view of Lemma \ref{subfield}.
\end{proof}

We remark that Corollary \ref{even} cannot be used to give a complete proof of Theorem \ref{main} in the case that $d$ is odd. Let $d$ be an odd divisor of $(\sqrt{q}-1)$, then $q \equiv 1 \pmod {2d}$, and thus the generalized Paley graph $GP(q,d)$ is well-defined. However, in this case, $\gcd(d, \sqrt{q}+1)=1$. To complete the proof of Theorem \ref{main}, in the next two sections we explore the connection between Gauss sums and generalized Paley graphs.

\section{Trace functions and Gauss sums}\label{nota}

\subsection{Trace functions and Fourier transforms}\label{trace}
For $\alpha \in F=\F_{q^m}$ and $K=\F_q$, the {\em trace} $\Tr_{F/K}(\alpha)$ of $\alpha$ over $K$ is defined by
$$
\Tr_{F/K}(\alpha)=\alpha+\alpha^q+\cdots+\alpha^{q^{m-1}}.
$$
If $K$ is the prime subfield of $F$, then $\Tr_{F/K}(\alpha)$ is called the {\em absolute trace} of $\alpha$, and simply denoted by $\Tr_{F}(\alpha)$. The following lemma lists some basic properties of the trace function.
\begin{lem} [{\cite[Theorem 2.23]{LN}}] \label{basic}
Let $K=\F_{q}$ and $F=\F_{q^{m}}$. Then the trace function $\Tr_{F / K}$ satisfies the following properties:
\begin{enumerate}[(i)]
    \item $\Tr_{F / K}(\alpha+\beta)=\Tr_{F / K}(\alpha)+\Tr_{F / K}(\beta)$ for all $\alpha, \beta \in F$;
    \item$\Tr_{F / K}(c \alpha)=c \Tr_{F / K}(\alpha)$ for all $c \in K, \alpha \in F$;
    \item  $\Tr_{F / K}$ is a linear transformation from $F$ onto $K,$ where both $F$ and $K$ are viewed as vector spaces over $K$.
\end{enumerate}
\end{lem}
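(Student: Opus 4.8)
The plan is to derive all three properties directly from the definition $\Tr_{F/K}(\alpha) = \alpha + \alpha^q + \cdots + \alpha^{q^{m-1}}$, using two elementary facts about the Frobenius map $\phi \colon x \mapsto x^q$ on $F = \F_{q^m}$. First, $\phi$ is additive: in characteristic $p$ the binomial coefficients $\binom{p}{k}$ with $0 < k < p$ vanish modulo $p$, so $(x+y)^p = x^p + y^p$, and iterating gives $(x+y)^q = x^q + y^q$. Second, $\phi$ fixes precisely the elements of $K = \F_q$, since $c^q = c$ holds exactly for the $q$ roots of $x^q - x$, which are the elements of $\F_q$. Everything below reduces to these two observations.

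For part (i), I would apply Frobenius additivity termwise: $\Tr_{F/K}(\alpha+\beta) = \sum_{i=0}^{m-1} (\alpha+\beta)^{q^i} = \sum_{i=0}^{m-1} (\alpha^{q^i} + \beta^{q^i})$, and regrouping the two sums yields $\Tr_{F/K}(\alpha) + \Tr_{F/K}(\beta)$. For part (ii), given $c \in K$ I would use $c^{q^i} = c$ for every $i$, which follows by iterating $c^q = c$, so that $(c\alpha)^{q^i} = c^{q^i}\alpha^{q^i} = c\,\alpha^{q^i}$; summing over $i$ and factoring out $c$ gives $\Tr_{F/K}(c\alpha) = c\,\Tr_{F/K}(\alpha)$. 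Together these show that $\Tr_{F/K}$ is a $K$-linear map from $F$ to $F$.

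The substance of part (iii) is that the image actually lands in $K$ and that the map is onto. For the first claim I would compute $\Tr_{F/K}(\alpha)^q = \sum_{i=0}^{m-1} \alpha^{q^{i+1}} = \sum_{i=1}^{m} \alpha^{q^i}$, and then invoke $\alpha^{q^m} = \alpha$, valid since $\alpha \in \F_{q^m}$, to see that this last sum equals $\Tr_{F/K}(\alpha)$ again; hence $\Tr_{F/K}(\alpha)$ is fixed by $\phi$ and therefore lies in $K$. The step I expect to require the most care is surjectivity. Here I would view $\Tr_{F/K}$ as the polynomial map $\alpha \mapsto \alpha + \alpha^q + \cdots + \alpha^{q^{m-1}}$ of degree $q^{m-1}$, which is strictly less than $q^m = |F|$; since this polynomial is not the zero polynomial (its leading coefficient is $1$), it cannot vanish at all $q^m$ points of $F$, so $\Tr_{F/K}$ is not identically zero. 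Being a nonzero $K$-linear functional into the one-dimensional $K$-vector space $K$, its image is a nonzero $K$-subspace of $K$ and hence all of $K$, which gives surjectivity and completes the proof.
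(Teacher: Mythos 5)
Your proof is correct and complete: the paper itself does not prove this lemma but cites it as Theorem 2.23 of Lidl--Niederreiter, and your argument (Frobenius additivity for (i), $c^{q^i}=c$ for (ii), Frobenius-invariance of the trace value plus the degree-$q^{m-1}$ root-counting bound for nonvanishing and hence surjectivity in (iii)) is precisely the standard proof given in that reference. Nothing is missing.
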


The following lemma is about the transitivity of the trace function.

\begin{lem}[{\cite[Theorem 2.26]{LN}}] \label{trans}
Let $K$ be a finite field, let $F$ be a finite extension of $K$ and $E$ a finite extension of $F$. Then for any $a \in E$, 
$$
\Tr_{E/K}(a)=\Tr_{F/K}\big(\Tr_{E/F}(a)\big).
$$
\end{lem}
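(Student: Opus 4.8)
The plan is to prove the identity by writing all three trace functions explicitly as sums of Frobenius conjugates and then checking that the two sides agree after a single reindexing. Set $m=[F:K]$ and $n=[E:F]$, so that $F=\F_{q^m}$ and $E=\F_{q^{mn}}$. With these conventions the three traces unwind as
\[
\Tr_{F/K}(b)=\sum_{k=0}^{m-1} b^{q^{k}} \quad (b\in F), \qquad
\Tr_{E/F}(a)=\sum_{j=0}^{n-1} a^{q^{mj}}, \qquad
\Tr_{E/K}(a)=\sum_{i=0}^{mn-1} a^{q^{i}}.
\]
Here the Frobenius governing $\Tr_{E/F}$ is $x\mapsto x^{q^{m}}$, since $F$ has $q^{m}$ elements. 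Before manipulating the right-hand side I would first record that $\Tr_{E/F}(a)$ really does land in $F$, so that the outer trace $\Tr_{F/K}$ may legitimately be applied to it; this is exactly the content of the surjectivity clause in Lemma \ref{basic}.

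The core computation is then to expand $\Tr_{F/K}\big(\Tr_{E/F}(a)\big)$. Because the map $x\mapsto x^{q^{k}}$ is the $k$-fold iterate of the Frobenius $x\mapsto x^{q}$, and raising to the $q$-th power is a ring homomorphism in characteristic $p$, each $q^{k}$-th power commutes with the finite sum defining $\Tr_{E/F}(a)$. Hence
\[
\Tr_{F/K}\big(\Tr_{E/F}(a)\big)
=\sum_{k=0}^{m-1}\Big(\sum_{j=0}^{n-1} a^{q^{mj}}\Big)^{q^{k}}
=\sum_{k=0}^{m-1}\sum_{j=0}^{n-1} a^{q^{mj+k}}.
\]
The final step is purely combinatorial: as $j$ runs over $\{0,1,\dots,n-1\}$ and $k$ over $\{0,1,\dots,m-1\}$, the exponent $mj+k$ takes each value in $\{0,1,\dots,mn-1\}$ exactly once, this being the division-with-remainder (base-$m$) bijection. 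Reindexing the double sum by $i=mj+k$ therefore collapses it to $\sum_{i=0}^{mn-1} a^{q^{i}}=\Tr_{E/K}(a)$, which is the claim.

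The one place that demands genuine care — and the step I would flag as the main obstacle — is keeping the Frobenius exponents straight in the inner trace. It is essential that $\Tr_{E/F}$ is built from powers of $x\mapsto x^{q^{m}}$ rather than $x\mapsto x^{q}$, because it is precisely the factor $m$ in the exponent $mj$ that makes the pairing $(j,k)\mapsto mj+k$ a clean bijection onto $\{0,\dots,mn-1\}$; with the wrong Frobenius the indices would overlap and the argument would fail. Once the additivity of the Frobenius is invoked and this indexing is set up correctly, the remainder is a formal manipulation.
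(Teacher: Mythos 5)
Your proof is correct. The paper does not prove this lemma at all --- it is imported directly from the reference \cite{LN} (Theorem 2.26) --- and your argument (expanding all three traces as sums of Frobenius iterates, noting via Lemma \ref{basic} that $\Tr_{E/F}(a)\in F$ so the outer trace applies, using additivity of $x\mapsto x^{q^k}$ in characteristic $p$, and collapsing the double sum via the division-with-remainder bijection $(j,k)\mapsto mj+k$) is precisely the standard textbook proof, so there is nothing to flag.
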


We introduce the following weighted version of the Fourier coefficients, which is helpful for our discussion.

\begin{defn}
Let $A$ be a subset of $\F_q$. For each $c \in \F_q^*$, we define 
\[
S(q,A;c)=\sum_{a\in A} e_p\big(\Tr_{\F_q}(ac)\big)=q\widehat{\mathbbm{1}_A}(c).
\]
\end{defn}

To compute the above exponential sums over a subset, it is helpful to establish the following orthogonality relations in finite fields. Recall that recall the orthogonality relation in the prime field $\F_p$ is the following: for $c \in \F_p$, we have
$$
\sum_{a \in \F_p} e_p(ca)=\begin{cases}
p, \quad \text{if }c=0, \\
0, \quad \text{if } c \in \F_p^*.
\end{cases}
$$
Note that this is just the formula for summing a geometric sequence. The following lemma extends the orthogonality relations to an arbitrary finite field. 

\begin{lem} \label{sumq}
Let $K$ be a subfield of $\F_q$. Then for $c \in \F_q$, $S(q,K;c)\neq 0$ if and only if  $\Tr_{\F_q/K}(c)=0$. In particular, if $c \in \F_q^*$, then $S(q,\F_q;c)=0.$
\end{lem}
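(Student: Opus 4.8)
The plan is to reduce the absolute-trace exponential sum to an orthogonality relation over the subfield $K$ by decomposing the trace. Write $K=\F_{p^k}$ and $\F_q=\F_{p^n}$ with $k\mid n$, so that $\Tr_{\F_q}=\Tr_{\F_q/\F_p}$. For $a\in K$ and $c\in\F_q$, transitivity of the trace (Lemma \ref{trans}) gives $\Tr_{\F_q}(ac)=\Tr_{K/\F_p}\big(\Tr_{\F_q/K}(ac)\big)$, and since $a\in K$, the $K$-linearity of $\Tr_{\F_q/K}$ (Lemma \ref{basic}(ii)) yields $\Tr_{\F_q/K}(ac)=a\,\Tr_{\F_q/K}(c)$. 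Setting $t:=\Tr_{\F_q/K}(c)\in K$, I obtain
\[
S(q,K;c)=\sum_{a\in K}e_p\big(\Tr_{K/\F_p}(at)\big).
\]

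Next I would split into cases according to whether $t=0$. If $t=0$, every summand equals $1$, so $S(q,K;c)=|K|\neq 0$. If $t\neq 0$, then multiplication by $t$ is a bijection of $K$, so as $a$ runs over $K$ so does $b:=at$, and the sum becomes $\sum_{b\in K}e_p\big(\Tr_{K/\F_p}(b)\big)$. Since $\Tr_{K/\F_p}$ is a surjective $\F_p$-linear map (Lemma \ref{basic}(iii)), its kernel is an $\F_p$-subspace of codimension $1$, so each fiber $\{b\in K:\Tr_{K/\F_p}(b)=j\}$ is a coset of the kernel and hence has the same size $|K|/p$ for every $j\in\F_p$. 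Therefore the sum equals $\tfrac{|K|}{p}\sum_{j\in\F_p}e_p(j)=0$ by the orthogonality relation in $\F_p$. This shows $S(q,K;c)\neq 0$ if and only if $t=\Tr_{\F_q/K}(c)=0$, which is the main claim. For the final assertion, I take $K=\F_q$, for which $\Tr_{\F_q/\F_q}$ is the identity map; thus $S(q,\F_q;c)\neq 0$ if and only if $c=0$, and in particular $S(q,\F_q;c)=0$ for every $c\in\F_q^*$.

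There is no serious obstacle here: the statement is essentially the fact that a nontrivial additive character sums to zero over a finite field, and the only thing to get right is the bookkeeping. The one genuinely structural move is passing from the absolute trace to the relative trace via transitivity, so that the $K$-linearity in the summation variable $a$ can be exploited to factor out $t$; everything afterward is routine. As an alternative to the equidistribution-of-fibers step, I could instead invoke Lemma \ref{ker} to describe $\ker(\Tr_{K/\F_p})$ directly, but the fiber-counting argument is the cleanest route to the orthogonality conclusion.
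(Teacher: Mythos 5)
Your proof is correct and follows essentially the same route as the paper: both reduce $S(q,K;c)$ to $\sum_{a\in K}e_p\big(\Tr_{K/\F_p}(at)\big)$ with $t=\Tr_{\F_q/K}(c)$ via trace transitivity and $K$-linearity, and both treat the case $t=0$ identically. The only divergence is the final orthogonality step: you count the equal-sized fibers of the surjective $\F_p$-linear map $\Tr_{K/\F_p}$ and sum a geometric series over $\F_p$, whereas the paper picks $y\in K$ with $\Tr_{K/\F_p}(y)\neq 0$ and shifts the summation variable to show the sum equals itself times a nontrivial root of unity; these finishes are routine variants of the same fact, so the arguments are essentially the same.
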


\begin{proof}
 Let $c \in \F_q$. Note that $K$ is a finite extension of the prime field $\F_p$, and $\F_q$ is a finite extension of the subfield $K$. By Lemma \ref{trans},
$$
S(q,K;c)=\sum_{a\in K} e_p\big(\Tr_{\F_{q}/\F_p}(ac)\big)=\sum_{a\in K} e_p\bigg(\Tr_{K/\F_p}\big(\Tr_{\F_{q}/K}(ac)\big)\bigg).
$$
Since $a \in K$, by Lemma \ref{basic} (ii), $\Tr_{\F_{q}/K}(ac)=a\Tr_{\F_{q}/K}(c)$. If $\Tr_{\F_q/K}(c)=0$, then it is clear that $S(q,K;c)=|K|>0$.

Next we assume $\Tr_{\F_q/K}(c)\neq 0$. By Lemma \ref{basic} (iii), $\Tr_{\F_q/K}(c) \in K$. Note that as $a$ runs over $K$, $\Tr_{\F_{q}/K}(ac)=a\Tr_{\F_{q}/K}(c)$ also runs over $K$. Therefore, 
\begin{align*}
S(q,K;c) =\sum_{a\in K} e_p\bigg(\Tr_{K/\F_p}\big(\Tr_{\F_{q}/K}(ac)\big)\bigg)=\sum_{a\in K} e_p\big(\Tr_{K/\F_p}(a)\big).
\end{align*}
Note that $\Tr_{K/\F_p}(x)=0$ has at most $|K|/p$ solutions in $K$, so there exists $y \in K$ such that $\Tr_{K/\F_p}(y) \neq 0$,  i.e. $e_p\big(\Tr_{K/\F_p}(y)\big) \neq 1$. Observe that as $a$ runs over $K$, $a+y$ also runs over $K$, so by Lemma \ref{basic} (i),  we have
$$
S(q,K;c)=\sum_{a\in K} e_p\big(\Tr_{K/\F_p}(a+y)\big)
=e_p\big(\Tr_{K/\F_p}(y)\big) \sum_{a\in K} e_p\big(\Tr_{K/\F_p}(a)\big)=e_p\big(\Tr_{K/\F_p}(y)\big)S(q,K;c).
$$
It follows that $S(q,K;c)=0$.  In particular, if $K=\F_q$ and $c \in \F_q^*$, then $\Tr_{\F_q/K}(c)=c \neq 0$, and thus $S(q,\F_q;c)=0.$
\end{proof}

\subsection{Gauss sums over finite fields}\label{Gauss}
In this section, we recall some basic facts for Gauss sums over finite fields. We refer the reader to \cite{BEW} and \cite[Chapter 5]{LN} for more details. 

\begin{defn} \label{defn:Gauss sum}
Let $\chi$ be a multiplicative character of $\F_q$; then the Gauss sum associated to $\chi$ is defined to be $$G(\chi)=\sum_{ c \in \F_q} \chi(c) e_p\big(\Tr_{\F_q}(c)\big).$$ 
\end{defn}

Gauss sums are important and fundamental tools in number theory, and they also have rich applications in combinatorics, for example, they could be used to construct new partial difference sets \cite{BMW, KM} and strongly regular Cayley graphs \cite{FX}. The following are some basic properties of Gauss sums.

\begin{lem}[Theorem 5.11, \cite{LN}] \label{gq}
Let $\chi$ be a non-trivial multiplicative character of $\F_q$. Then $|G(\chi)|=\sqrt{q}$.
\end{lem}

\begin{defn}
Given a nontrivial multiplicative character $\chi$ of $\F_q$, define the normalized Gauss sum associated to $\chi$ to be $\epsilon(\chi)=q^{-1/2}G(\chi)$.
\end{defn} 
Given Lemma \ref{gq}, to determine $G(\chi)$, it suffices to determine $\epsilon(\chi)$, which lies on the unit circle. The following lemma is the key in proving Theorem \ref{t1.3}. It enables one to express $\overline{\chi(a)}$ as the linear combination of $\chi(c)$, where $c$ runs over $\F_q$.

\begin{lem} [Theorem 5.12, \cite{LN}] \label{qf}
Let $\chi$ be a multiplicative character of  $\F_q$. Then for any $a \in \F_q$, 
$$
\overline{\chi(a)}=\frac{1}{G(\chi)} \sum_{ c \in \F_q} \chi(c) e_p\big(\Tr_{\F_q}(ac)\big). 
$$
\end{lem}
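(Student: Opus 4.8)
The plan is to reduce the whole identity to a single change of variables in the defining sum for $G(\chi,\psi_a)$. Since the paper always works with characters $\chi$ of order $d>1$, I may assume $\chi$ is non-trivial, so that Lemma \ref{gq} guarantees $|G(\chi)|=\sqrt{q}>0$ and dividing by $G(\chi)$ is legitimate. With that in place, the second equality in the statement is merely the unpacking of $\psi_a(c)=\psi_1(ac)=e_p\big(\Tr(ac)\big)$ inside the definition of $G(\chi,\psi_a)$, so the entire content is the first equality $G(\chi,\psi_a)=\overline{\chi(a)}\,G(\chi)$.

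First I would dispose of the degenerate case $a=0$. Here $\psi_0\equiv 1$, so $G(\chi,\psi_0)=\sum_{c\in\F_q}\chi(c)$, which vanishes by the orthogonality of the non-trivial multiplicative character $\chi$ over $\F_q^*$ (together with the convention $\chi(0)=0$); the left-hand side is $\overline{\chi(0)}=0$ as well, so both sides agree.

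For $a\neq 0$ the key step is the substitution $c\mapsto a^{-1}c$. As $c$ ranges over $\F_q$, so does $t=ac$ (and the terms with $c=0$ or $t=0$ contribute nothing since $\chi(0)=0$), and the multiplicativity of $\chi$ yields
$$
G(\chi,\psi_a)=\sum_{c\in\F_q}\chi(c)\,e_p\big(\Tr(ac)\big)=\sum_{t\in\F_q}\chi(a^{-1}t)\,e_p\big(\Tr(t)\big)=\chi(a^{-1})\sum_{t\in\F_q}\chi(t)\,e_p\big(\Tr(t)\big)=\chi(a^{-1})\,G(\chi).
$$
Finally I would invoke that $\chi$ takes values on the unit circle for $a\in\F_q^*$, so that $\chi(a^{-1})=\chi(a)^{-1}=\overline{\chi(a)}$, and divide through by $G(\chi)$ to conclude.

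There is essentially no genuine obstacle here: once the non-triviality of $\chi$ is fixed, the argument is a one-line reindexing of the sum. The only points that require a little care are ensuring $G(\chi)\neq 0$ before dividing (supplied by Lemma \ref{gq}), correctly handling the boundary case $a=0$ via orthogonality, and the harmless identification $\chi(a^{-1})=\overline{\chi(a)}$ coming from $|\chi(a)|=1$.
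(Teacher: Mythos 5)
Your proof is correct. Note that the paper itself does not prove this lemma---it is quoted directly from Lidl--Niederreiter \cite{LN} as Theorem 5.12---and your argument (the change of variables $t=ac$ for $a\neq 0$, orthogonality of the non-trivial character for $a=0$, and Lemma \ref{gq} to justify dividing by $G(\chi)$) is exactly the standard textbook proof, including the correct observation that non-triviality of $\chi$ is genuinely needed, which is the only case the paper ever uses.
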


The following is the definition of a pure Gauss sum.
\begin{defn} \label{pure1}
Let $\chi$ be a multiplicative character of $\F_q$. If some nonzero integral power of $G(\chi)$ is a real number, then we say $G(\chi)$ is pure. 
\end{defn}
We will relate the maximum cliques in a generalized Paley graph with the purity of the corresponding Gauss sums. The following is a simple lemma.

\begin{lem}\label{trivial}
Let $\chi$ be a multiplicative character of  $\F_q$. If $\chi$ is trivial, then $G(\chi)$ is trivial. If $\chi$ is nontrivial and $G(\chi)$ is pure, then $\epsilon(\chi)$ is a root of unity. 
\end{lem}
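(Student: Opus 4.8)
The plan is to treat the two assertions separately, each by a short direct computation, using only the modulus bound from Lemma~\ref{gq} and the orthogonality relations already recalled.

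For the first assertion, I would simply evaluate $G(\chi_0)$ for the trivial multiplicative character $\chi_0$. Recalling the convention $\chi_0(0)=0$, I would write
\[
G(\chi_0)=\sum_{c \in \F_q} \chi_0(c)\psi_1(c)=\sum_{c \in \F_q^*} \psi_1(c)=\left(\sum_{c \in \F_q}\psi_1(c)\right)-1.
\]
Since $\psi_1$ is a nontrivial additive character, the orthogonality relation (equivalently, the surjectivity of $\Tr$ combined with the geometric-series identity recalled just before Lemma~\ref{sumq}) gives $\sum_{c \in \F_q}\psi_1(c)=0$, so $G(\chi_0)=-1$. In particular $G(\chi_0)$ is a rational integer, hence real, which is the degenerate (``trivial'') case.

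For the second assertion, I would first invoke Lemma~\ref{gq}: since both $\chi$ and $\psi_1$ are nontrivial, $|G(\chi)|=\sqrt{q}$, so $\epsilon(\chi)=q^{-1/2}G(\chi)$ lies on the unit circle. Writing $G(\chi)=\sqrt{q}\,\epsilon(\chi)$, purity supplies a nonzero integer $n$ with $G(\chi)^n \in \R$. Replacing $n$ by $-n$ if necessary does not affect reality, since $G(\chi)^{-n}=\overline{G(\chi)^n}/q^{\,n}$ is real exactly when $G(\chi)^n$ is; thus I may assume $n>0$. Then $G(\chi)^n=q^{n/2}\epsilon(\chi)^n$ being real means it equals its own conjugate, so using $|\epsilon(\chi)|=1$ I obtain
\[
q^{n/2}\epsilon(\chi)^n=q^{n/2}\overline{\epsilon(\chi)}^{\,n}=q^{n/2}\epsilon(\chi)^{-n}.
\]
Cancelling the positive factor $q^{n/2}$ yields $\epsilon(\chi)^{2n}=1$, so $\epsilon(\chi)$ is a $(2n)$-th root of unity.

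There is essentially no serious obstacle here: once Lemma~\ref{gq} pins down the modulus, the argument is elementary. The only points requiring slight care are the bookkeeping around the sign of the exponent $n$, the convention $\chi_0(0)=0$ in the trivial case, and ensuring that ``pure'' is applied with a genuinely nonzero power so that the cancellation of $q^{n/2}\neq 0$ is legitimate.
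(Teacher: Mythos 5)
Your proposal is correct and follows essentially the same route as the paper: an explicit evaluation of the trivial Gauss sum for the first claim, and for the second the observation that $|\epsilon(\chi)|=1$ (via Lemma~\ref{gq}) together with purity forces $\epsilon(\chi)^{2n}=1$, hence a root of unity. The only discrepancy is the value you compute: you obtain $G(\chi_0)=-1$, which is the value consistent with the paper's stated convention $\chi(0)=0$, whereas the paper's own proof writes $G(\chi_0)=\sum_{c\in\F_q}e_p\big(\Tr(c)\big)=0$, implicitly treating $\chi_0(0)=1$; either value is real, so the conclusion of the lemma (and its later use in Proposition~\ref{stronger}) is unaffected.
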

\begin{proof}
Note that the trivial multiplicative character $\chi_0$ is pure since by Lemma \ref{sumq},
$
G(\chi_0)=\sum_{c \in \F_q} e_p\big( \Tr_{\F_q}(c)\big)=0. 
$
If $\chi$ is nontrivial and $G(\chi)$ is pure, then $|\epsilon(\chi)|=1$ and it must be a root of unity.
\end{proof}

\subsection{Pure Gauss sums and semi-primitive Gauss sums}\label{super}
Chowla \cite{Chowla} proved that if $p$ is an odd prime and $\chi$ a multiplicative character of $\F_p$, then the normalized Gauss sum $\epsilon(\chi)$ is a root of unity if and only if $\chi$ is the quadratic character. A generalization of Chowla's Theorem to any finite fields can be found in \cite[Theorem 4]{Evans}.  It will be helpful if there is a formula to compute Gauss sums so that we can check whether a given Gauss sum is pure. Unfortunately, in general, it is difficult to evaluate Gauss sums explicitly. However, for certain special characters, there are explicit formulas for their associated Gauss sums. 

Stickelberger \cite{Stick} showed that if $-1$ is a power of $p \pmod d$, then all Gauss sums over finite fields of characteristic $p$, corresponding to multiplicative characters with order $d$, are in fact real numbers. These Gauss sums are called {\em semi-primitive}. Semi-primitive Gauss sums are also referred to as uniform cyclotomy or supersingular Gauss sums; see for example \cite{Aoki3, BMW}.
These terminologies are all borrowed from algebraic geometry.

\begin{defn}\label{supers}
Let $q$ be a power of $p$ and let $\chi$ be a multiplicative character of $\F_q$ with order $d$. If $-1$ is a power of $p \pmod d$, then we say $G(\chi)$ is semi-primitive.
\end{defn}

The following theorem is due to Stickelberger \cite{Stick}; see for example \cite[Theorem 11.6.3]{BEW}. The original proof by Stickelberger \cite{Stick} was based on deep algebraic geometry. Simplified proofs were given by Carlitz \cite{LC} and Baumert and McEliece \cite{BM}. 

\begin{thm}[Stickelberger's theorem] \label{forr} 
Let $p$ be an odd prime and let $d>2$ be an integer. Suppose there exists a
positive integer $t$ such that $-1 \equiv p^{t} \pmod d,$ with $t$ chosen minimal. Let $\chi$ be a multiplicative character of $\mathbb{F}_{p^v}$ with order $d$.  Then $v=2ts$ for some positive integer $s$, and
$$
p^{-v/2}G(\chi)=(-1)^{s-1+(p^{t}+1)s/d}.
$$
\end{thm}

One standard way to prove Stickelberger's theorem is to first establish the following special case (see for example \cite[Theorem 5.16]{LN}) , and then using the Hasse-Davenport lifting Theorem (see for example \cite[Theorem 11.5.2]{BEW}) to extend the special case to prove Theorem \ref{forr}.

\begin{thm}[Stickelberger's theorem, special case]\label{stick1}
Let $p$ be an odd prime. Let $q$ be an even power of $p$, and let $\chi$ be a nontrivial multiplicative character of $\F_{q}$ with order $d$ dividing $\sqrt{q}+1$. Then
\begin{equation}\label{stick}
G(\chi)=
\begin{cases}
\sqrt{q} \quad &\text{if $d$ is odd or $\frac{\sqrt{q}+1}{d}$ is even},\\
-\sqrt{q}, \quad & \text{if $d$ is even and $\frac{\sqrt{q}+1}{d}$ is odd}.
\end{cases}    
\end{equation}
\end{thm}

A relatively simple proof of Theorem \ref{stick1} can be found in \cite[Theorem 11.6.3]{BEW}. We will present a new simple proof in Section \ref{main2proof} using Theorem \ref{main2}, which illustrates that results on generalized Paley graphs can be used to establish certain facts on Gauss sums. We will see that the proof of Theorem \ref{main2} only relies on basic tools from Section \ref{Gauss}. 

Note that Theorem \ref{forr} implies that semi-primitive Gauss sums are pure. Evans \cite{Evans2} provided the following necessary and sufficient condition for which a Gauss sum being semi-primitive. 

\begin{thm}[Theorem 1, \cite{Evans2}]\label{power}
Let $d>2$ be an integer such that $d \mid (q-1)$. Let $\chi$ be a multiplicative character of $\F_q$ with order $d$. Then $G(\chi)$ is semi-primitive if and only if $G(\chi^j)$ is pure for all $j \in \N$.
\end{thm}

\section{Gauss sums and maximum cliques in generalized Paley graphs} \label{prep}
The main goal in this section is to present the proof for Theorem \ref{main}. The strategy of the proof goes as follows.
We will first prove Theorem \ref{t1.3}, which recovers the trivial upper bound as we have seen in Section \ref{PGC}. Then we will try to carefully understand the conditions such that we get equality. In particular, we will show a refined version of Theorem \ref{main2} and see that a necessary condition is that the corresponding Gauss sum is pure. We will then use the nice embeddings among generalized Paley graphs to deduce that the corresponding Gauss sum is in fact semi-primitive. This would then allow us to use Stickelberger's Theorem to compute $G(\chi)$ explicitly, which eventually leads to the proof of Theorem \ref{main} in Section \ref{proof}.

For simplicity, we will use $\Tr$ as the shorthand for the absolute trace function $\Tr_{\F_q}$ if no confusion arises.

\subsection{Proof of Theorem \ref{t1.3}}
Before proving Theorem \ref{t1.3}, we need the following lemma.

\begin{lem} \label{sumc}
For any $A \subset \F_q$, we have
\begin{equation} \label{sumceq}
\sum_{c \in \F_q^*} \big|S(q,A;c)\big|^2=
\sum_{c \in \F_q^*} \bigg|\sum_{a\in A} e_p\big(\Tr_{\F_q}(ac)\big)\bigg|^2=q|A|-|A|^2.   
\end{equation}
\end{lem}
\begin{proof}
Expanding the left-hand side of equation \eqref{sumceq}, we get
\begin{align*}
\sum_{c \in \F_q^*} \bigg|\sum_{a\in A} e_p\big(\Tr_{\F_q}(ac)\big)\bigg|^2
&=\sum_{c \in \F_q} \bigg|\sum_{a\in A} e_p\big(\Tr_{\F_q}(ac)\big)\bigg|^2 - |A|^2\\
&=\sum_{c \in \F_q} \sum_{a_1,a_2 \in A} e_p\big(\Tr_{\F_q}(c(a_1-a_2))\big)- |A|^2\\    
&= q|A|- |A|^2+ \sum_{\substack{a_1,a_2 \in A \\a_1 \neq a_2 }}\bigg(\sum_{c \in \F_q} e_p\big(\Tr_{\F_q}(c(a_1-a_2))\big) \bigg)\\
&=q|A|-|A|^2 +\sum_{\substack{a_1,a_2 \in A \\a_1 \neq a_2 }}S(q,\F_q, a_1-a_2).
\end{align*}
By Lemma \ref{sumq}, $S(q,\F_q, a_1-a_2)=0$ for any $a_1,a_2 \in A$ such that $a_1 \neq a_2$. Equation \eqref{sumceq} follows.
\end{proof}

Now we are ready to prove Theorem \ref{t1.3}. 

\begin{proof}[Proof of Theorem \ref{t1.3}] 
By Lemma \ref{qf}, we can write
$$
\sum_{a\in A,\, b\in B}\overline{\chi(a+b)}=\frac{1}{G(\chi)} \sum_{\substack{a\in A,\, b\in B \\ c \in \F_q}} \chi(c) e_p\big(\Tr((a+b)c)\big)
=\frac{1}{G(\chi)} \sum_{c \in \F_q} \chi(c) \sum_{a\in A,\, b\in B} e_p\big(\Tr((a+b)c)\big).
$$
Since $\chi$ is nontrivial, by Lemma \ref{gq}, $|G(\chi)|= \sqrt{q}$. Recall that $|\chi(c)|=1$ for each $c \in \F_q^*$, so by the triangle inequality, we have
\begin{align*}
\bigg|\sum_{a\in A,\, b\in B}\chi(a+b)\bigg| 
&\leq
 \frac{1}{\sqrt{q}}  \sum_{c \in \F_q^*} \bigg|\sum_{a\in A,\, b\in B} e_p\big(\Tr((a+b)c)\big)\bigg|\\
&=\frac{1}{\sqrt{q}}  \sum_{c \in \F_q^*} \bigg|\sum_{a\in A} e_p\big(\Tr(ac)\big)\bigg| \bigg|\sum_{b\in B} e_p\big(\Tr(bc)\big)\bigg|    \\
&\leq \frac{1}{\sqrt{q}}  \bigg(\sum_{c \in \F_q^*} \bigg|\sum_{a\in A} e_p\big(\Tr(ac)\big)\bigg|^2\bigg)^{1/2} \bigg(\sum_{c \in \F_q^*}\bigg|\sum_{b\in B} e_p\big(\Tr(bc)\big)\bigg|^2\bigg)^{1/2}\\
&=\frac{1}{\sqrt{q}} \bigg(\sum_{c \in \F_q^*} \big|S(q,A;c)\big|^2\bigg)^{1/2} \bigg(\sum_{c \in \F_q^*}  \big|S(q,B;c)\big|^2\bigg)^{1/2}.
\end{align*}
where we used the Cauchy-Schwarz inequality in the last inequality. By Lemma \ref{sumc}, 
$$
 \sum_{c \in \F_q^*} \big|S(q,A;c)\big|^2=q|A|-|A|^2, \quad
 \sum_{c \in \F_q^*} \big|S(q,B;c)\big|^2=q|B|-|B|^2.
$$
The required estimate follows.
\end{proof}

\subsection{A refined version of Theorem \ref{t1.3}}

By mimicking the proof of Theorem \ref{t1.3}, we get the following refined version of Theorem \ref{t1.3} in the case that $B=-A$. To be precise, we analyze the situation where Theorem \ref{t1.3} is sharp.

\begin{prop}\label{ga}
Let $p$ be an odd prime and $q$ an even power of $p$. Let $\chi$ be a non-trivial multiplicative character of $\F_q$ and $A$ a subset of $\F_q$ with cardinality $\sqrt{q}$. Then $\chi(a-b)=1$ whenever $a,b \in A$ and $a \neq b$ if and only if for any $c \in \F_q^*$, the complex numbers $\chi(c) |S(q,A;c)|^2$ and $G(\chi)$ share the same argument.
\end{prop}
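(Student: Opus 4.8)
The plan is to mimic the proof of Theorem \ref{t1.3} but keep track of exactly when each inequality is an equality, since the clique condition $\chi(a-b)=1$ for $a\neq b$ is precisely the condition making the double sum $\sum_{a,b\in A}\chi(a-b)$ attain its extremal value $|A|^2-|A|$. First I would set $B=-A$, so that the double sum $\sum_{a\in A,\,b\in B}\chi(a+b)=\sum_{a,b\in A}\chi(a-b)$ reduces to summing over pairs with $a\neq b$ (the diagonal contributes $\chi(0)=0$). The key identity is Lemma \ref{qf}, which lets me expand
\begin{equation*}
\sum_{a,b\in A}\overline{\chi(a-b)}=\frac{1}{G(\chi)}\sum_{c\in\F_q^*}\chi(c)\,|S(q,A;c)|^2,
\end{equation*}
using $S(q,A;c)\overline{S(q,A;c)}=|S(q,A;c)|^2$ after collecting the $a$ and $b=-a'$ exponential sums (the $c=0$ term vanishes because $\chi(0)=0$). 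This is the central formula: it expresses the clique-defining double sum as a weighted sum of the $\chi(c)|S(c)|^2$, with the single scalar $G(\chi)$ out front.

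Next I would observe that the left-hand side equals $\overline{\sum_{a,b\in A}\chi(a-b)}$, and that $A$ being a clique forces $\chi(a-b)=1$ for all $a\neq b$, so the double sum equals the real number $|A|^2-|A|=q-\sqrt{q}$ (using $|A|=\sqrt q$). Rearranging the identity gives
\begin{equation*}
\sum_{c\in\F_q^*}\chi(c)\,|S(q,A;c)|^2=(q-\sqrt q)\,G(\chi).
\end{equation*}
So the clique condition is equivalent to this weighted sum being a nonnegative real multiple of $G(\chi)$. To turn this into the stated pointwise argument condition, I would invoke Lemma \ref{sumc}, which gives $\sum_{c\in\F_q^*}|S(q,A;c)|^2=q|A|-|A|^2=q-\sqrt q$ when $|A|=\sqrt q$. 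Thus each complex number $\chi(c)|S(c)|^2$ has modulus bounded appropriately, and their sum having modulus exactly $(q-\sqrt q)|G(\chi)|=(q-\sqrt q)\sqrt q$ — the maximum possible given the triangle inequality and the $L^1$-bound from Lemma \ref{sumc} — forces equality in the triangle inequality. Equality in $|\sum z_c|\le \sum|z_c|$ holds if and only if all nonzero summands $z_c=\chi(c)|S(c)|^2$ share a common argument, namely the argument of $G(\chi)$. This is the forward direction.

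For the converse, I would reverse the argument: if every $\chi(c)|S(c)|^2$ shares the argument of $G(\chi)$, then the triangle inequality is tight, and combined with the modulus computation from Lemma \ref{sumc} the weighted sum equals exactly $(q-\sqrt q)G(\chi)$; running the identity backwards shows $\sum_{a,b\in A}\chi(a-b)=q-\sqrt q=|A|^2-|A|$. Since each of the $|A|^2-|A|$ off-diagonal terms $\chi(a-b)$ has modulus $1$, their sum attaining the value $|A|^2-|A|$ forces every term to equal $1$, giving the clique condition. The main obstacle I anticipate is bookkeeping the equality case carefully: I must confirm that the modulus of the weighted sum genuinely achieves the triangle-inequality maximum (this is where Lemma \ref{sumc} is essential, pinning down $\sum|S(c)|^2$ exactly), and that ``sharing the same argument as $G(\chi)$'' is the correct common argument rather than merely ``sharing a common argument with each other.'' The identity above fixes the phase to be that of $G(\chi)$, so this should resolve cleanly, but it is the step requiring the most care.
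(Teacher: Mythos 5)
Your proposal follows essentially the same route as the paper's own proof: the same central identity obtained from Lemma \ref{qf}, the same $L^2$ evaluation from Lemma \ref{sumc}, and the same equality-case analysis of the triangle inequality in both directions. The only flaw is an arithmetic slip: with $|A|=\sqrt{q}$, Lemma \ref{sumc} gives $\sum_{c\in\F_q^*}|S(q,A;c)|^2=q|A|-|A|^2=q\sqrt{q}-q$, not $q-\sqrt{q}$; this is harmless, since $q\sqrt{q}-q=(q-\sqrt{q})\,|G(\chi)|$ is exactly the equality that makes your triangle-inequality step tight, so the argument goes through once the value is corrected.
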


\begin{proof}
For each $c \in \F_q^*$, let $S(c)=S(q,A;c)$. Similar to the proof of Theorem \ref{t1.3}, we can write
\begin{align*}
\sum_{a,b\in A}\overline{\chi(a-b)}
&=\frac{1}{G(\chi)} \sum_{c \in \F_q^*} \chi(c) \sum_{a,b\in A} e_p\big(\Tr((a-b)c)\big)\\
&=\frac{1}{G(\chi)} \sum_{c \in \F_q^*} \chi(c) \bigg(\sum_{a\in A} e_p\big(\Tr(ac)\big)\bigg) \bigg(\sum_{b\in A} e_p\big(\Tr(-bc)\big)\bigg)    \\
&=\frac{1}{G(\chi)} \sum_{c \in \F_q^*} \chi(c) \bigg(\sum_{a\in A} e_p\big(\Tr(ac)\big)\bigg) \bigg(\sum_{b\in A} e_p\big({-}\Tr(bc)\big)\bigg)    \\
&=\frac{1}{G(\chi)} \sum_{c \in \F_q^*} \chi(c) |S(c)|^2.
\end{align*}
Recall that, by Lemma \ref{sumc}, $\sum_{c \in \F_q^*} |S(c)|^2=q|A|-|A|^2=q\sqrt{q}-q$. 

Suppose $\chi(a-b)=1$ whenever $a,b \in A$ and $a \neq b$, then
\begin{equation} \label{key}
\sum_{c \in \F_q^*} \chi(c) |S(c)|^2=G(\chi)\sum_{a,b\in A}\overline{\chi(a-b)}=G(\chi)(|A|^2-|A|)=G(\chi)(q-\sqrt{q}).   
\end{equation}
Applying the triangle inequality, we obtain
$$
\sum_{c \in \F_q^*} |S(c)|^2=q\sqrt{q}-q=|G(\chi)|(q-\sqrt{q})=\bigg|\sum_{c \in \F_q^*} \chi(c) |S(c)|^2 \bigg| \leq \sum_{c \in \F_q^*} |S(c)|^2.
$$
This is actually equality, so $\chi(c)|S(c)|^2$ must share the same argument (as complex numbers) for all $c \in \F_q^*$, furthermore, by equation \eqref{key}, this argument is also shared by the Gauss sum $G(\chi)$.

Conversely, suppose for any $c \in \F_q^*$, $\chi(c) |S(c)|^2$ and $G(\chi)$ share the same argument. Since $\sum_{c \in \F_q^*} |S(c)|^2>0$, there is $c_0 \in \F_q^*$, such that $S(c_0) \neq 0$. Since $\chi(c_0) |S(c_0)|^2$ and $G(\chi)$ share the same argument, it follows that $\epsilon(\chi)=\chi(c_0)$, and $S(c)=0$ whenever $\chi(c) \neq \chi(c_0)$. Therefore, 
\[
\sum_{c \in \F_q^*} \chi(c) |S(c)|^2
=\chi(c_0)\sum_{\substack{c \in \F_q^*}} |S(c)|^2
=\epsilon(\chi) (q\sqrt{q}-q)
=G(\chi)(q-\sqrt{q}).
\]
Then equation \eqref{key} still holds and $\sum_{a,b\in A}\overline{\chi(a-b)}=|A|^2-|A|$. It follows that $\chi(a-b)=1$ whenever $a,b \in A$ and $a \neq b$.
\end{proof}

\subsection{Proof of Theorem \ref{main2} and its consequences} \label{main2proof}
Similar to the proof of Theorem \ref{trivial_ub}, we can interpret Proposition \ref{ga} in the context of maximum cliques in generalized Paley graphs. We prove the following theorem, which is a refined statement of Theorem \ref{main2}.

\begin{thm} \label{main3}
Let $d$ be a positive integer greater than $1$. Let $q \equiv 1 \pmod {2d}$ be an even power of a prime $p$ and let $\chi$ be a multiplicative character of $\F_q$ with order $d$. Let $A$ be a subset of $\F_q$ with $|A|=\sqrt{q}$. For each $c \in \F_q^*$, let $S(c)=S(q,A;c)$. Then $A$ is a maximum clique in the generalized Paley graph $GP(q,d)$ if and only if for any $c \in \F_q^*$, the complex numbers $\chi(c) |S(c)|^2$ and $G(\chi)$ share the same argument.
In particular, if $\omega\big(GP(q,d)\big)=\sqrt{q}$, then the normalized Gauss sum $\epsilon(\chi)$ is a $d$-th root of unity,  the Gauss sum $G(\chi)=\epsilon(\chi)\sqrt{q}$ is pure, and $S(c)=0$ for any $c \in \F_q^*$ such that $\chi(c)\neq \epsilon(\chi)$.
\end{thm}

\begin{proof}
By the definition of $GP(q,d)$, $A$ forms a (maximum) clique in $GP(q,d)$ if and only if $\chi(a-b)=1$ whenever $a,b \in A$ and $a \neq b$. Therefore, by Proposition \ref{ga}, $A$ is a maximum clique in the generalized Paley graph $GP(q,d)$ if and only if for any $c \in \F_q^*$, the complex numbers $\chi(c) |S(c)|^2$ and $G(\chi)$ share the same argument. 

Next, we assume $\omega\big(GP(q,d)\big)=\sqrt{q}$. By Lemma \ref{sumc}, there is $c_0 \in \F_q^*$ such that $0 \neq \chi(c_0) |S(c_0)|^2$ and $G(\chi)$ share the same argument, i.e. $\chi(c_0)=\epsilon(\chi)$, since $|\epsilon(\chi)|=1$. This implies that $\epsilon(\chi)$ is a $d$-th root of unity. By Lemma \ref{gq}, $G(\chi)=\epsilon(\chi)\sqrt{q}$ is pure. If $c \in \F_q^*$ such that $\chi(c)\neq \epsilon(\chi)$, then $\chi(c)$ and $G(\chi)$ do not share the same argument, and thus $S(c)=0$.
\end{proof}

\begin{rem}\rm
It is standard to show that if $\chi$ has order $d$ and $G(\chi)$ is pure, then $G(\chi^j)$ is pure for any $(j,d)=1$; see for example \cite[Lemma 2.6]{KM}. Thus, if $G(\chi)$ is pure for a character $\chi$ with order $d$, so are other characters  with order $d$. Therefore, in Theorem \ref{main3}, we can take $\chi$ to be any multiplicative character with order $d$.
\end{rem}

Note that Theorem \ref{main3} provides a connection between Gauss sums and the maximum cliques. For the proof of Theorem \ref{main}, we will use facts about Gauss sums. Conversely, it is also interesting to deduce information on Gauss sums using the structure of cliques in the corresponding generalized Paley graphs. In particular, we can produce an elegant and short proof for Stickelberger's Theorem (see Theorem \ref{stick1}).

\begin{proof}[Proof of Theorem \ref{stick1}]
Since $d \mid (\sqrt{q}+1)$ and $\sqrt{q}$ is odd, we have $q \equiv 1 \pmod {2d}$. Thus, the generalized Paley graph $GP(q,d)$ is well-defined. By Theorem \ref{t4}, $\omega\big(GP(q,d)\big)=\sqrt{q}$, and the subfield $A=\F_{\sqrt{q}}$ forms a maximum clique in $GP(q,d)$. 

Let $c \in \F_q^*$ such that $c^{\sqrt{q}-1}=-1$. Then $\Tr_{\F_{q}/A}(c)=c^{\sqrt{q}}+c=0$. By Lemma \ref{sumq}, $S(q,A;c) \neq 0$. Therefore, by Theorem \ref{main3}, $G(\chi)=\epsilon(\chi)\sqrt{q}=\chi(c)\sqrt{q}$. Thus, to compute $G(\chi)$, it suffices to compute $\chi(c)$. Let $g$ be a primitive root of $\F_q$ and write $c=g^k$. Then $-1=c^{\sqrt{q}-1}=g^{k(\sqrt{q}-1)}$ implies that $k(\sqrt{q}-1) \equiv \frac{q-1}{2} \pmod{(q-1)}$, i.e. $k \equiv \frac{\sqrt{q}+1}{2} \pmod{(\sqrt{q}+1)}$. Since $\chi$ has order $d$ and $d \mid (\sqrt{q}+1)$, it follows that 
\[
\chi(c)=\chi(g^k)=\big(\chi(g)\big)^k=\big(\chi(g)\big)^{\frac{\sqrt{q}+1}{2}}=
\begin{cases}
1, \quad &\text{if $\frac{\sqrt{q}+1}{2d} \in \Z$},\\
-1, \quad & \text{otherwise}.
\end{cases}
\]
Note that $\frac{\sqrt{q}+1}{2d} \in \Z$ is equivalent to $d$ is odd or $\frac{\sqrt{q}+1}{d}$ is even, thus equation \eqref{stick} follows immediately.
\end{proof}

\subsection{Exponential-sum-free characterization of maximum cliques} \label{maxclique}
In this section, we illustrate how Theorem \ref{main3} would help give a new proof of Theorem \ref{PS}. For simplicity, we consider the case that $d=2$, i.e., we work on maximum cliques of Paley graphs. We assume that $q=p^{2s}$, and $p^s \equiv 1 \pmod 4$; the case $p^s \equiv 3 \pmod 4$ is similar.

\begin{lem}\label{zerotrace}
If $c$ is a square in $\F_q^*$, then $\Tr_{\F_q/\F_{p^s}}(c) \neq 0$.
\end{lem}
\begin{proof}
Note that $\Tr_{\F_q/\F_{p^s}}(x)=x+x^{p^s}=0$ if and only if $x^{p^s-1}=-1$. Let $g$ be a primitive root of $\F_q$. After writing $x=g^k$, it suffices to show that $k$ is odd. We have $-1=x^{p^s-1}=g^{k(p^s-1)}$, which is equivalent to $k(p^s-1) \equiv \frac{p^{2s}-1}{2} \pmod {p^{2s}-1}$, i.e. $k \equiv \frac{p^s+1}{2} \pmod {p^s+1}$. Since $p^s \equiv 1 \pmod 4$, $\frac{p^s+1}{2}$ is odd and $p^s+1$ is even; thus, $k$ must be odd.
\end{proof}

Using Theorem \ref{main3}, and Lemma \ref{zerotrace}, we obtain the following exponential-sum-free characterization of maximum cliques.

\begin{prop}\label{char}
$A$ is a maximum clique in the Paley graph with order $q=p^{2s}$ if and only if $\{\Tr_{\F_q/\F_{p^s}}(ac): a \in A\}=\F_{p^s}$ for each square $c \in \F_q^*$.
\end{prop}

\begin{proof}
Suppose $\{\Tr_{\F_q/\F_{p^s}}(ac): a \in A\}=\F_{p^s}$ for each square $c \in \F_q^*$. Then for any square $c \in \F_q^*$, by Lemma \ref{trans} and Lemma \ref{sumq}, we have
\[
\sum_{a\in A} e_p\big(\Tr_{\F_q/\F_p}(ac)\big)
=\sum_{a\in A} e_p\big(\Tr_{\F_{p^s}/\F_p}(\Tr_{\F_{q}/\F_{p^s}}(ac))\big)
=\sum_{x \in \F_{p^s}} e_p\big(\Tr_{\F_{p^s}}(x)\big)=0.
\]
Thus Theorem \ref{main3} implies that $A$ is a maximum clique. 

Conversely, assume that $A$ is a maximum clique. Fix a square $c \in \F_q^*$. Then for any $a,b \in A$ with $a \neq b$, $a-b$ is a square in $\F_q^*$, and thus $(a-b)c$ is also a square. By Lemma~\ref{zerotrace}, $\Tr_{\F_q/\F_{p^s}} ((a-b)c) \neq 0$, i.e. $\Tr_{\F_q/\F_{p^s}} (ac) \neq \Tr_{\F_q/\F_{p^s}} (bc)$. Therefore, $\{\Tr_{\F_q/\F_{p^s}}(ac): a \in A\}=\F_{p^s}$.
\end{proof}

We expect that Proposition \ref{char} could be used to produce a purely number-theoretical proof of Theorem \ref{PS}. 

\subsection{Proof of Theorem \ref{main}}\label{proof}

In Theorem \ref{main3}, we showed that the trivial upper bound is tight only if the corresponding Gauss sum is pure. Next, we show a stronger statement.

\begin{prop}\label{stronger}
Let $d>1$ be an integer. Let $q \equiv 1 \pmod {2d}$ be a square and let $\chi$ be a multiplicative character of $\F_q$ with order $d$. If $\omega\big(GP(q,d)\big)=\sqrt{q}$, then $G(\chi^j)$ is pure for any $j \in \N$.
\end{prop}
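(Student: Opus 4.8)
The plan is to reduce the purity of $G(\chi^j)$ for a general exponent $j$ to the purity already extracted from a clique of size $\sqrt{q}$ in Theorem \ref{main2}. First I would dispose of the degenerate case: if $d \mid j$ then $\chi^j$ is the trivial multiplicative character, and by Lemma \ref{trivial} its Gauss sum is pure. So from now on assume $d \nmid j$ and set $e := d/\gcd(j,d)$, which is precisely the order of $\chi^j$ and satisfies $1 < e \mid d$.

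The key observation is a monotonicity of generalized Paley graphs under divisibility of the parameter. Since $e \mid d$, every $d$-th power in $\F_q^*$ is also an $e$-th power, so the edge set of $GP(q,d)$ is contained in that of $GP(q,e)$; in particular every clique of $GP(q,d)$ is a clique of $GP(q,e)$. Because $q \equiv 1 \pmod{2d}$ forces $q \equiv 1 \pmod{2e}$, the graph $GP(q,e)$ is again a legitimate generalized Paley graph of square order, so the trivial upper bound (Theorem \ref{trivial_ub}) applies to it. Combining these facts gives
\[
\sqrt{q} = \omega\big(GP(q,d)\big) \le \omega\big(GP(q,e)\big) \le \sqrt{q},
\]
whence $\omega\big(GP(q,e)\big) = \sqrt{q}$.

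Now I would apply Theorem \ref{main2} to the graph $GP(q,e)$ together with the order-$e$ character $\chi^j$. The ``in particular'' clause of that theorem asserts that whenever the clique number equals $\sqrt{q}$, the Gauss sum of a character realizing the relevant order is pure; this immediately yields that $G(\chi^j)$ is pure. Together with the trivial case $d \mid j$, this establishes purity of $G(\chi^j)$ for every $j \in \N$.

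I do not expect a serious obstacle beyond correctly isolating the monotonicity $GP(q,d) \subseteq GP(q,e)$ for $e \mid d$. The subtle point worth emphasizing is why this extra argument is needed at all: the Remark following Theorem \ref{main2} only delivers purity of $G(\chi^j)$ when $\gcd(j,d)=1$, i.e.\ for the other characters of the \emph{full} order $d$, whereas the characters $\chi^j$ with $\gcd(j,d)>1$ have strictly smaller order and cannot be reached that way. It is exactly for these lower-order characters that one must descend to the coarser graph $GP(q,e)$ and reapply the Fourier-analytic characterization.
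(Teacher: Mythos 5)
Your proposal is correct and is essentially identical to the paper's own proof: both dispose of the trivial-character case via Lemma \ref{trivial}, then use the subgraph relation $GP(q,d)\subseteq GP(q,e)$ for $e=d/\gcd(j,d)$ together with the trivial upper bound to conclude $\omega\big(GP(q,e)\big)=\sqrt{q}$, and finally apply Theorem \ref{main2} to the order-$e$ character $\chi^j$. Your closing remark about why the coprime-exponent remark after Theorem \ref{main2} does not suffice is a nice clarification, but the underlying argument is the same.
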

\begin{proof}
Assume that $\omega\big(GP(q,d)\big)=\sqrt{q}$. By Theorem \ref{main3}. we know that $G(\chi)$ is pure. Fix $j \in \N$. Note that $\chi'=\chi^j$ is a multiplicative character of $\F_q$ with order $d'=\frac{d}{\gcd(d,j)}$. If $d'=1$, then $\chi'$ is the trivial multiplicative character of $\F_q$, and $G(\chi')$ is pure by Lemma \ref{trivial}. Next we assume $d'>1$. Note that $d' \mid d$, so we still have $q \equiv 1 \pmod {2d}$ and $GP(q,d')$ is a well-defined generalized Paley graph. Thus, Lemma \ref{embedding} implies that $\omega\big(GP(q,d')\big)=\sqrt{q}$, and we can use Theorem \ref{main3} to conclude that $G(\chi')$ is also pure.
\end{proof}

Combining Proposition \ref{stronger} and Theorem \ref{power}, we immediately obtain the following corollary.

\begin{cor}\label{semi-primitive}
Let $d>1$ be an integer. Let $q \equiv 1 \pmod {2d}$ be an even power of an odd prime $p$, and let $\chi$ be a multiplicative character of $\F_q$ with order $d$. If the clique number of $GP(q,d)$ is $\sqrt{q}$, then $G(\chi)$ is semi-primitive, equivalently, $-1$ is a power of $p \pmod d$.
\end{cor} 

We have shown that a sufficient condition for $\omega\big(GP(q,d)\big)=\sqrt{q}$ is that the corresponding Gauss sum is semi-primitive. Thus, we can apply Theorem \ref{forr} to get the explicit formula for the associated Gauss sum. Furthermore, in the case that $d$ is odd, we can determine whether $\omega\big(GP(q,d)\big)$ attains the trivial upper bound using the following proposition.

\begin{prop}\label{odd}
Let $p$ be an odd prime and let $d \geq 3$ be an odd integer. Suppose there exists a
positive integer $t$ such that $-1 \equiv p^{t} \pmod d,$ with $t$ chosen minimal. Let $q=p^v \equiv 1 \pmod {2d}$, then $v=2ts$ for some positive integer $s$.  If $s$ is even, then $\omega\big(GP(q,d)\big)<\sqrt{q}$.  If $s$ is odd, then $d \mid (\sqrt{q}+1)$ and $\omega\big(GP(q,d)\big)=\sqrt{q}$.
\end{prop}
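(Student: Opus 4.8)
The statement of Proposition \ref{odd} concerns $d \geq 3$ odd with $-1 \equiv p^t \pmod d$ and $t$ minimal, where $q = p^v \equiv 1 \pmod{2d}$. My plan is to start by establishing the structural constraint $v = 2ts$, then split into the two parity cases for $s$ and determine in each case whether $d \mid (\sqrt{q}+1)$, invoking Theorem \ref{main} (or equivalently Corollary \ref{supersingular} together with Theorem \ref{forr}) to convert the divisibility condition into a statement about the clique number. First I would record that the order of $p$ in $(\Z/d\Z)^*$ is exactly $2t$: indeed $-1 \equiv p^t$ forces $p^{2t} \equiv 1 \pmod d$, and minimality of $t$ together with $d$ odd (so $-1 \not\equiv 1 \pmod d$) guarantees the multiplicative order is not a proper divisor of $2t$. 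Since $d \mid (q-1) = (p^v - 1)$ is equivalent to $2t \mid v$, I conclude $v = 2ts$ for some positive integer $s$.

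\textbf{The two cases.} With $v = 2ts$ and $\sqrt{q} = p^{v/2} = p^{ts}$, the crux is computing $p^{ts} \pmod d$. Using $p^t \equiv -1 \pmod d$, I get $p^{ts} = (p^t)^s \equiv (-1)^s \pmod d$. When $s$ is odd, $\sqrt{q} \equiv -1 \pmod d$, i.e. $d \mid (\sqrt{q}+1)$; when $s$ is even, $\sqrt{q} \equiv 1 \pmod d$, i.e. $d \mid (\sqrt{q}-1)$, and since $d \geq 3$ this gives $\gcd(d, \sqrt{q}+1) = \gcd(d, 2) = 1$, so $d \nmid (\sqrt{q}+1)$. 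In the odd case, $d \mid (\sqrt{q}+1)$ immediately yields $\omega\big(GP(q,d)\big) = \sqrt{q}$ by Theorem \ref{t4} (or Theorem \ref{main}). In the even case, $d \nmid (\sqrt{q}+1)$, so the ``in particular'' clause of Theorem \ref{main} gives $\omega\big(GP(q,d)\big) \leq \sqrt{q}-1 < \sqrt{q}$, as claimed.

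\textbf{The main obstacle.} The genuinely delicate point is the very first step: verifying that the multiplicative order of $p$ modulo $d$ is precisely $2t$ and not a proper divisor. The hypothesis only gives $p^t \equiv -1$; I must rule out the order being some $t' \mid 2t$ with $t' < 2t$. The key is that any such $t'$ would have to divide $t$ (since if $t' \nmid t$ but $t' \mid 2t$ then $t'$ is even with $t'/2 \mid t$, and one checks $p^{t'/2} \equiv \pm 1$ leads back to a contradiction with the minimality of $t$), and $p^{t'} \equiv 1$ with $t' \mid t$ would contradict $p^t \equiv -1 \not\equiv 1$. Here the hypothesis that $t$ is \emph{minimal} with $p^t \equiv -1 \pmod d$ is exactly what I exploit, and the oddness of $d$ is essential to guarantee $-1 \not\equiv 1 \pmod d$ so that $-1$ is a genuine order-two element. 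Once this order computation is secured, the rest is the routine congruence bookkeeping and a single appeal to Theorem \ref{main}.
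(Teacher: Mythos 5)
Your derivation of $v=2ts$ (via showing the order of $p$ modulo $d$ is exactly $2t$) and your treatment of the case $s$ odd are both correct; the latter coincides with the paper's argument, and the former is more elementary than the paper's, which simply reads $v=2ts$ off from Theorem \ref{forr}. The genuine gap is in the case $s$ even: you conclude $\omega\big(GP(q,d)\big)<\sqrt{q}$ by applying the ``in particular'' clause of Theorem \ref{main}, but this is circular within the paper's logical structure. Proposition \ref{odd} is itself a key ingredient in the paper's proof of Theorem \ref{main} (it is exactly what yields $d'\mid(\sqrt{q}+1)$ for the odd part $d'$ of $d$ there), so Theorem \ref{main} cannot be invoked to prove it. Your parenthetical alternative, ``Corollary \ref{supersingular} together with Theorem \ref{forr},'' does not close the gap either: Corollary \ref{supersingular} only asserts that $\omega\big(GP(q,d)\big)=\sqrt{q}$ forces $G(\chi)$ to be supersingular, which is already guaranteed by the hypothesis $-1\equiv p^t \pmod d$; and when $s$ is even the Gauss sum $G(\chi)=-\sqrt{q}$ \emph{is} supersingular (and pure), so no contradiction can come from supersingularity or purity alone --- the sign is invisible to those notions.

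What the case $s$ even actually requires is the finer conclusion of Theorem \ref{main2}: if $\omega\big(GP(q,d)\big)=\sqrt{q}$, then the normalized Gauss sum $\epsilon(\chi)$ must be a $d$-th root of unity. The paper combines this with the explicit evaluation from Theorem \ref{forr}: since $d$ is odd and $d\mid(p^t+1)$ with $p^t+1$ even, the quantity $(p^t+1)s/d$ is even, so
\[
\epsilon(\chi)=(-1)^{s-1+(p^t+1)s/d}=(-1)^{s-1},
\]
which equals $-1$ when $s$ is even. Since $d$ is odd, $-1$ is not a $d$-th root of unity, contradicting Theorem \ref{main2}; hence $\omega\big(GP(q,d)\big)<\sqrt{q}$. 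Replacing your appeal to Theorem \ref{main} by this argument (Theorem \ref{forr} plus Theorem \ref{main2}) repairs the proof and is precisely the paper's route.
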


\begin{proof}
Let $\chi$ be a multiplicative character of $\mathbb{F}_{p^v}$ with order $d$.  By Theorem \ref{forr}, $v=2ts$ for some positive integer $s$, and
\begin{equation} \label{eq1}
p^{-v/2}G(\chi)=(-1)^{s-1+(p^{t}+1)s/d}.
\end{equation}
Since $d$ is odd and $p^t+1$ is even, equation \eqref{eq1} can be simplified to $\epsilon(\chi)=(-1)^{s-1}$.

If $s$ is even, then $\epsilon(\chi)=-1$ is not a $d$-th root of unity since $d$ is odd. By Theorem \ref{main3}, we have $\omega\big(GP(q,d)\big)<\sqrt{q}$.

If $s$ is odd, then $\sqrt{q}=p^{v/2} \equiv (p^t)^{s} \equiv -1 \pmod d$, i.e. $d \mid (\sqrt{q}+1)$. By Theorem \ref{t4}, we have $\omega\big(GP(q,d)\big)=\sqrt{q}$.
\end{proof}

Now we are ready to prove our main result.

\begin{proof}[Proof of Theorem \ref{main}]
The case that $d$ is even has been proved in Corollary \ref{even}. Next we assume that $d$ is odd. If $d \mid (\sqrt{q}+1)$, then by Theorem \ref{t4}, $\omega\big(GP(q,d)\big)=\sqrt{q}$.

Conversely, suppose $\omega\big(GP(q,d)\big)=\sqrt{q}$. By Corollary \ref{semi-primitive}, $-1$ is a power of $p \pmod d$. By Proposition \ref{odd}, we must have $d \mid (\sqrt{q}+1)$. This completes the proof.
\end{proof}

\section*{Acknowledgement}
The author thanks Shamil Asgarli and Greg Martin for their valuable suggestions, and Lior Silberman,  J\'ozsef Solymosi, and Joshua Zahl for helpful discussions. The research of the author was supported in part by a Four Year Doctoral Fellowship from the University of British Columbia.

\end{document}